\def\adots{\mathinner{\mkern2mu\raise 1pt\hbox{.}\mkern 3mu\raise
4pt\hbox{.}\mkern1mu\raise 7pt\hbox{{.}}}}
\newtheorem{theorem}{Theorem}[section]
\newtheorem{lemma}{Lemma}[section]
\newtheorem{proposition}{Proposition}[section]
\theoremstyle{remark}
\numberwithin{equation}{section}
\def\cddots{\mathinner{\mkern1mu\raise1pt\vbox{\kern1pt\hbox{.}}\mkern2mu
    \raise4pt\hbox{.}\mkern2mu\raise7pt\hbox{.}\mkern1mu}}
\begin{document}

\title[Finite Blaschke Products with Preassigned Critical Points]
{A Numerical Model for the  Construction\\of Finite Blaschke Products with \\ 
	Preassigned Distinct Critical Points}
\date{}
\author[C.\,Glader]{Christer Glader}
\email{cglader@abo.fi}
\address{Department of Mathematics, \AA bo Akademi University, FIN-20500,
 \AA bo, Finland}

\author[R.\,P\"orn]{Ray P\"orn}
\email{rporn@abo.fi}
\address{Department of Didactics of Mathematics, \AA bo Akademi University, FIN-65100,
 Vasa, Finland.}

\keywords{Finite Blaschke Products, Critical Points, Nonlinear Systems}

\subjclass{30J10, 30E05}

\thanks{The first author was supported by the Magnus Ehrnrooth Foundation.}

\begin{abstract}
We present a numerical model for determining a finite Blaschke
product of degree $n+1$ having $n$ preassigned distinct critical
points $z_1,\dots,z_n$ in the complex (open) unit disk
$\mathbb{D}$. The Blaschke product is uniquely determined up to
postcomposition with conformal automorphisms of $\mathbb{D}$.
The proposed method is based on the construction of a sparse nonlinear
system where the data dependency is isolated to two vectors and on a certain transformation of the critical points.
The efficiency and accuracy of the method is illustrated in several examples.
\end{abstract}

\maketitle

\section{Introduction}

A finite Blaschke product of degree $n$ is a rational function of the form
\begin{equation} \label{bladef1}
B(z) = c\, \prod_{j=1}^n \frac{z - \alpha_j}{1 - \overline\alpha_j\, z},\quad
  c,\alpha_j\in\mathbb{C},\ \vert c\vert = 1,\ |\alpha_j| < 1\, ,
\end{equation}
which thereby has all its zeros in the open unit disc
$\mathbb{D}$, all poles outside the closed unit disc
$\overline{\mathbb{D}}$ and constant modulus $|B(z)| = 1$ on the
unit circle $\mathbb{T}$. The overbar in (\ref{bladef1}) and in
the sequel stands for complex conjugation.

The finite Blaschke products of degree $n$ form a subset of the rational
functions of degree $n$ which are unimodular on $\mathbb{T}$.
These functions are given by all fractions
\begin{equation} \label{bladef2}
\tilde B(z) = \frac{a_0 +  a_1\, z + ... +  a_n\, z^n} {\overline
 a_n + \overline a_{n-1}\, z + ... + \overline a_0\, z^n },\quad
 a_0,...,a_n\in {\mathbb{C}}\, .
\end{equation}
An irreducible rational function of form (\ref{bladef2}) is a
finite Blaschke product when all its zeros are in $\mathbb{D}$. We
adopt the convention of calling irreducible rational functions of
form (\ref{bladef2}) Blaschke forms if the function has at least
one pole in $\mathbb{D}$. A Blaschke form can be interpreted as a
quotient of finite Blaschke products.

The critical points of a finite Blaschke product are the zeros of
the derivative and there are exactly $n-1$ critical points in
$\mathbb{D}$, counting multiplicity, if the Blaschke product is of
degree $n$. The classical Gauss-Lucas theorem states that the
critical points of a polynomial lie in the convex hull of its
zeros, see e.g. \cite{ShS} p. 25. The analogous geometrical result
for finite Blaschke products, due to Walsh, is that the critical
points situated in $\mathbb{D}$ lie within or on the non-Euclidean
convex hull of the zeros of the Blaschke product with respect to
the Poincar\'e metric, see \cite{W1} and \cite{W2} p. 157, where
Walsh refers to finite Blaschke products as non-Euclidean
polynomials. In \cite{W1} Walsh also shows that if the zeros
$\alpha_1,\dots,\alpha_n$ of the Blaschke product lie within a
circle $C$ in $\mathbb{D}$, then so do the $n-1$ critical points.
It can furthermore be shown that the $n-1$ critical points in $\mathbb{D}$ lie in
the convex hull of the point
 set $\{0, \alpha_1,\dots,\alpha_n\}$, see \cite{ShS} p. 374.

This paper presents a numerical method to solve the following problem:
\medskip

\noindent {\bf Problem I}. {\it Given $n$ distinct points $z_1,\ldots , z_n$
in $\mathbb{D}$, find a finite Blaschke product $B$ of degree
$n+1$ such that $B'(z_j) = 0,\ j=1,\dots,n$.}
\medskip

A solution to Problem I always exists by a theorem of Heins
\cite{H} p. 48:

\begin{theorem}\label{thmHeins} Given points $z_1,\dots,z_n$ in $\mathbb{D}$, not necessarily distinct, then there is a finite Blaschke product $B$ of degree $n+1$ whose set of critical points in $\mathbb{D}$ coincides with the prescribed points $z_1,\dots,z_n$.
\end{theorem}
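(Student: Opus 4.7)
My plan is to prove Theorem \ref{thmHeins} by a topological degree argument on the moduli space of normalized Blaschke products. Since post-composition with an automorphism of $\mathbb{D}$ preserves the critical set, I may normalize a degree-$(n{+}1)$ Blaschke product $B$ so that $B(0)=0$ and $c>0$; such a $B$ is then uniquely specified by its remaining $n$ zeros, identifying the moduli space with the symmetric power $X := \mathbb{D}^{n}/S_{n}$. The critical-point assignment becomes a continuous map
\[
\Phi : X \longrightarrow Y := \mathbb{D}^{n}/S_{n},\qquad \{\alpha_{1},\dots,\alpha_{n}\}\longmapsto \text{zeros of }B'\text{ in }\mathbb{D}.
\]
Both $X$ and $Y$ are real $2n$-manifolds (after resolving the diagonal strata), and the theorem amounts to the surjectivity of $\Phi$.

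The central step, and the main obstacle, is to show that $\Phi$ is \emph{proper}. Suppose $(\alpha_{1}^{(k)},\dots,\alpha_{n}^{(k)})\in X$ is a sequence whose images lie in a compact set $K\subset Y$ while some $\alpha_{j}^{(k)}$ accumulates on $\partial\mathbb{D}$. Then the corresponding Blaschke factor $(z-\alpha_{j}^{(k)})/(1-\overline{\alpha_{j}^{(k)}}\,z)$ converges locally uniformly on $\mathbb{D}$ to a unimodular constant, so along a subsequence $B^{(k)}$ tends to a Blaschke product of degree strictly less than $n{+}1$, whose derivative has fewer than $n$ zeros in $\mathbb{D}$. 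Hurwitz's theorem then forces at least one zero of $(B^{(k)})'$ to leave every compact subset of $\mathbb{D}$, contradicting the compactness of $K$. Note that Walsh's theorem, quoted above, controls only the reverse containment of zeros versus critical points, so this degeneration/Hurwitz argument is really needed here.

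With properness in hand, $\Phi$ has a well-defined Brouwer degree, which I would compute at the fully symmetric preimage $\alpha_{1}=\cdots=\alpha_{n}=0$. There $B(z)=z^{n+1}$ and the critical set is the $n$-fold zero at $0$; a direct first-order expansion of $B'$ under small perturbations of the $\alpha_{j}$'s identifies the linearization of $\Phi$ on the symmetric power with an invertible real-linear map, giving local degree $\pm 1$ and hence $\deg\Phi=\pm 1$. Surjectivity of $\Phi$ follows, and with it the existence of a Blaschke product $B$ of degree $n{+}1$ whose critical points in $\mathbb{D}$ are exactly the preassigned $z_{1},\dots,z_{n}$. The only auxiliary subtlety is that the degree calculation must be performed intrinsically on the symmetric product, since $\Phi$ is ramified at the highly symmetric point $z^{n+1}$.
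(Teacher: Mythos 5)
First, a point of context: the paper does not prove Theorem \ref{thmHeins} at all. It is quoted from Heins \cite{H}, and the authors explicitly remark that all existing proofs (Heins, Zakeri \cite{Z}, Kraus--Roth \cite{KR1}) are nonconstructive; the paper's contribution is a numerical scheme, not a proof of existence. So your proposal must be judged against the literature rather than against an in-paper argument. Your overall strategy --- normalize so that $B$ is parametrized by its $n$ free zeros, view the critical-point assignment as a proper map $\Phi$ of $\mathrm{Sym}^n(\mathbb{D})$ to itself, and conclude surjectivity by a topological argument --- is close in spirit to Zakeri's proof, and your properness step (degeneration to a lower-degree Blaschke product plus Hurwitz) is essentially sound. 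Two small remarks there: $\mathrm{Sym}^n(\mathbb{D})$ is already a complex $n$-manifold via elementary symmetric functions, so no ``resolution of diagonal strata'' is needed; and you should pass to a subsequence in $\overline{\mathbb{D}}^{\,n}$ so that the escaping factors genuinely converge to unimodular constants.

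The genuine gap is in the degree computation. The linearization of $\Phi$ at the fully symmetric point $B(z)=z^{n+1}$ is \emph{not} invertible for $n\ge 2$. Writing $p(z)=z\prod_{j}(z-\alpha_j)$, $q(z)=\prod_j(1-\overline{\alpha}_j z)$ and $e_1=\sum_j\alpha_j$, all higher elementary symmetric functions are of second order in the $\alpha_j$, so to first order
\begin{equation*}
W(z)=p'(z)q(z)-p(z)q'(z)\ \approx\ (n+1)z^{n}-n e_1 z^{n-1}-n\overline{e}_1 z^{n+1},
\end{equation*}
whose roots in $\mathbb{D}$ are $0$ with multiplicity $n-1$ and one simple root near $\tfrac{n}{n+1}e_1$. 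Hence, to first order, the critical multiset depends only on $e_1$: in the coordinates given by elementary symmetric functions the differential of $\Phi$ at the origin has real rank $2$, not $2n$, and its kernel contains all the directions $e_2,\dots,e_n$. Your claim that ``a direct first-order expansion identifies the linearization with an invertible real-linear map, giving local degree $\pm1$'' therefore fails at exactly the point where you propose to compute the degree. (A secondary, smaller omission: even granting a local degree computation, you would need to show that $\{0,\dots,0\}$ is the \emph{only} preimage of $\{0,\dots,0\}$, which you assert implicitly but do not argue.) To repair the proof you would need either to compute the local degree at this degenerate point by other means, to find a regular value of $\Phi$ and count all of its preimages with signs, or to switch to the route actually used by Zakeri: injectivity of $\Phi$ (the uniqueness half of Heins's theorem) combined with invariance of domain and properness yields an open and closed image in the connected target, hence surjectivity without any degree computation.
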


The finite Blaschke product $B$ in Theorem~\ref{thmHeins} is
unique up to postcomposition by conformal automorphisms of
$\mathbb{D}$, see the comments on this by Kraus and Roth in
\cite{KR1} on p. 185, where they also note that all proofs that
exist of Theorem~\ref{thmHeins}, e.g. the proof by Zakeri \cite{Z}
and including the one by Heins, are nonconstructive. In \cite{KR1}
a constructive method to find a Blaschke product with prescribed
critical points is presented, but the authors however claim that
it is not really suitable to obtain a solution in explicit form
and pose the question: ``Is there a finite algorithm that allows
one to compute a finite Blaschke product from its critical
points?'' It is primarily this question that has motivated us to
investigate the problem in the case with distinct prescribed
critical points. In \cite{KR1} Theorem~\ref{thmHeins} is
generalized to the (infinite) Blaschke product setting where the
sequence of critical points in $\mathbb{D}$ is required to satisfy
a Blaschke condition. In the recent paper \cite{SW} Semmler and Wegert show that the problem of determining a finite Blaschke product with prescribed critical points is equivalent to two other problems, namely the problem of finding the equilibrium position of moveable point charges interacting with a special configuration of fixed charges, and the problem of solving a moment problem for the canonical representation of power moments on the real axis. Problem I has also been studied in the circle packing context by Stephenson in \cite{Ste}, p. 268.

The {\it Wronskian} of two polynomials $p$ and $q$ that define a rational function $R := p/q$ is given by
\begin{equation*}
W(z) := p'(z)\, q(z) - p(z)\, q'(z)\, ,
\end{equation*}
so $R$ has a critical point $z_i$ of multiplicity $n_i$ if and
only if that critical point is a zero of $W$ of multiplicity
$n_i$. General rational functions can be ordered into classes
where the functions are identical up to postcomposition by
linear-fractional transformations and where the theory of Fuchsian
differential equations and Wronskians come into play, see
\cite{Sch}. The Wronskians of finite Blaschke products
(\ref{bladef1}) and Blaschke forms (\ref{bladef2}) are very
special in structure, they are, after a unimodular scaling, self-inversive polynomials, which
means that if $W$ is of degree $n$, then $e^{i\varphi}\, W(z) = z^n\,
\overline{e^{i\varphi}\, W(1/\overline z )}$, for some $\varphi\in\mathbb{R}$ . More specifically, if $z_1,\dots ,
z_k$ are non-zero critical points with multiplicities $n_1,\dots
,n_k$ of a Blaschke product or a Blaschke form in $\mathbb{D}$ of
degree $n+1$, then also $1/\overline z_1,\dots,1/\overline z_k$
are critical points with corresponding multiplicities. If $n_0$ is
the multiplicity of $z_0=0$ as a critical point, ($n_0 = 0$ if
zero is not a critical point), we obtain
\begin{equation}\label{blaWronskian1}
W(z) = r\, e^{i\theta}\, z^{n_0}\, \prod_{j=1}^k (z-z_j)^{n_j}
(z-\frac{1}{\overline z_j})^{n_j}\, ,\quad r > 0\, ,\ \theta\in\mathbb{R}\, ,\ \sum_{j=0}^k n_j = n\, ,
\end{equation}
as the factored representation in the critical points of the
Wronskian for a finite Blaschke product or Blaschke form of
irreducible degree $n+1$. If the unimodularly scaled Wronskian is written in coefficient form $e^{i\varphi}\, W(z)=\sum_{j=0}^{2n} c_j z^j$, the self-inversiveness implies that $c_j=\overline{c}_{2n-j}$, $j=0,\ldots ,2n$, shortly denoted $c = \mathbf{flip} \, \overline{c}$, where $c$ is the vector of coefficients and \textbf{flip} is the operator that reverses the order of the elements in a vector. We call a vector $c$ self-reversive if it satisfies $c = \mathbf{flip} \, \overline{c}$.
The \textbf{flip} operator applied to a matrix conjugates and reverses
the order of the elements along both dimensions. 

In section 2 we derive a model with linear and quadratic constraints
for numerical solution of Problem I. The efficiency of the method
is demonstrated in section 3.

\section{A sparse linear model for Problem I}

The goal of this section is to recast a nonlinear (conjugate
quadratic) system obtained from the Wronskian for the solution of
Problem I. To this end we derive a sparse linear system with simple
quadratic constraints (\ref{sparsemodel}), that contains the
solution to Problem I and also the meromorphic Blaschke form
solutions. We are able to completely describe the nullspace of a relaxed 
linear system derived from the full Wronskian, apart from one data dependent basis vector which,
together with a particular solution to the system, can be computed
with the fast Fourier transform. This renders a good starting point for the solution
scheme described in section 3 for solving the reduced relaxed linear system with quadratic constraints (\ref{sparsemodel}). To accomplish the
derivation of the model (\ref{sparsemodel}) reordering of the terms of the Wronskian
are performed according to their polynomial degree and certain index sets are constructed. 
To make this process more transparent we illustrate it with intertwined examples for the case $n = 3$.

To singel out a particular Blaschke product $B$ of degree $n+1$
that solves Problem I above, one could use the normalization by
Zakeri in \cite{Z}: $B(0) = 0$ and $B(1) = 1$. We however proceed
with a different normalization presented below and assume that $n$
distinct nonzero critical points $z_1,\ldots,z_n$ in $\mathbb{D}$
are preassigned. 

That $z_i\ne 0$ introduces no loss of
generality, because if $z_1=0$ we could transform the data by
defining 
\begin{equation}\label{transformation}
b(z)=\frac{z-z_\star}{1-\overline{z}_\star z}
\end{equation}

with $z_\star\in\mathbb{D}$ and $z_\star\ne z_j\, ,\ j=1,\ldots ,n$. 
Later, in section 3, we in fact advocate the use of $z_\star=(z_1+\ldots+z_n)/n$
 as a standard normalization of the data. Then, if $\tilde B (z)$ is a finite Blaschke
 product of degree $n+1$ with critical points $ b(z_1),\ldots,b(z_n)$, the
Blaschke product $B(z) := \tilde{B}(b(z))$ solves Problem I for
$z_1,\ldots,z_n$. Having computed $\tilde B$ and its zeros $\tilde\alpha_j$, the
 zeros $\alpha_j$ of the Blaschke product $B$ are given by $\alpha_j = b^{-1}(\tilde\alpha_j)$.

Define $z_{n+1} = 1/\overline
z_1,\dots, z_{2n}=1/\overline z_{n}$. Then we have a vector
$[z_1,\dots,z_{2n}]^T$ of $2 n$ distinct prescibed critical
points, of which the $n$ first are in $\mathbb{D}$ and the rest
are outside $\overline{\mathbb{D}}$.
Now we make the Ansatz that
the Blaschke product $B$ is of degree $n+1$ and of form
(\ref{bladef2}) with the normalization $B(0) = 0$ and $a_{n+1} =
1$,
\begin{equation}\label{Ansatz}
B(z) = \frac{p(z)}{q(z)}=\frac{a_1 z + \dots + a_n\, z^n + z^{n+1}}{1 + \overline a_n\, z + \dots + \overline a_1\, z^n}\, .
\end{equation}
For this Ansatz the requirement $W(z_k) = 0,\ k=1,\dots, 2n$,
generates a nonlinear system in the complex coefficients $a_i$. Expansion of
\begin{align*}
W(z) =\ & p'(z)q(z)-p(z)q'(z)  \\
     =\ &(a_1+2a_2z+...+na_nz^{n-1}+(n+1)z^n)\cdot (1+\overline{a}_nz+...+\overline{a}_1z^n) \\
        &- (a_1z +a_2z^2+ ... + a_nz^n + z^{n+1})\cdot (\overline{a}_n + 2\overline{a}_{n-1}z + ... + n\overline{a}_1z^{n-1}) \\
     =\ &\sum_{i=1}^n ia_iz^{i-1} + \sum_{i=1}^n \sum_{j=1}^n ia_i\overline{a}_{n-j+1}z^{i+j-1} +
                    (n+1)\sum_{j=1}^n \overline{a}_{n-j+1}z^{n+j}\\
     &+\, (n+1)z^n - \sum_{i=1}^n \sum_{j=1}^n ja_i\overline{a}_{n-j+1}z^{i+j-1} - \sum_{j=1}^n j\overline{a}_{n-j+1}z^{n+j}\, ,
\end{align*}
gives

\begin{equation}\label{Wronskian1} 
 \begin{aligned}  
     W(z) =\ &\sum_{i=1}^n \sum_{j=1}^n (i-j) a_i\overline{a}_{n-j+1} z^{i+j-1} +
                        \sum_{i=1}^n i a_i z^{i-1}\\
                 &+ \sum_{i=1}^n (n-i+1) \overline{a}_{n-i+1} z^{n+i} + (n+1)z^n\, .
  \end{aligned}                      
\end{equation}

\noindent Thus our focus will be on the following nonlinear system

\begin{equation}\label{System1}
\begin{aligned} &\underbrace{\sum_{i=1}^n \sum_{j=1}^n (i-j)
a_i\overline{a}_{n-j+1} z_k^{i+j-1}}_{\textrm{quadratic in } a} +
\underbrace{\sum_{i=1}^n i a_i z_k^{i-1} + \sum_{i=1}^n (n-i+1)
\overline{a}_{n-i+1} z_k^{n+i}}_{\textrm{linear in } a} \\
& \qquad\quad + \underbrace{ (n+1) z_k^n}_{\textrm{constant}} =
0\qquad \iff \qquad A x(a) = b\, ,
\end{aligned}
\end{equation}
where $A$ is a matrix of size $2n\times (n^2+n)$, $b=-(n+1)
[z_1^n,\dots,z_{2n}^n]^T$ and $x = x(a)$ is a vector of variables
with quadratic structure. This dense system has $n$ variables, $2n$ equations and each equation has $n^2-n$ conjugate quadratic terms and $2n$ linear terms. It turns out that the matrix $A$ has
very high condition number, mainly due to many high powers of $z_k$, 
resulting in an ill-conditioned system. This problem can be circumvented by studying the {\bf relaxed
linear system}
\begin{equation}\label{Relaxedsystem}
  A x = b\, ,\quad x\in\mathbb{C}^{n^2+n}\, ,
\end{equation}
which contains all solutions to the nonlinear system $A x(a) = b$.
When the quadratic structure is considered the vector is denoted by $x(a)$ and
if this condition is relaxed the vector is denoted by $x$.

The Wronskian (\ref{Wronskian1}) has a total of
$n^2+n$ terms that are dependent on coefficients $a$ and one
constant term. The terms in $W$ are now reordered. All terms that
correspond to a certain degree $d$, $0\leq d\leq 2n$, are summed
in a specific order. For $0\leq d\leq n-1$ each sum starts with the linear term of degree
$d$ and continues with all quadratic terms of degree $d$ ordered
with increasing index $i$ for variables $a_i$. 
For $n+1\leq d\leq 2n$ the order is reversed.
The part of the Wronskian that contains
all terms of degree $d$ dependent of $a$ is denoted by $W^d(z)$, so
\begin{equation}\label{WronskDef}
W(z)=\sum_{d=0}^{2n} W^d(z) + (n+1)z^n.
\end{equation}

The formal definition of $W^d(z)$ is:

\begin{equation}\label{Wronskian2}
W^d(z)= 
 \begin{cases}
         a_1\, ,   &   \text{if } d=0, \\
         2a_2z\, , & \text{if } d=1, \\
         (d+1)a_{d+1}z^d + \\ 
         \quad\qquad \sum_{i=1}^d (2i-d-1)a_i\overline{a}_{n-d+i}z^d\, , & \text{if } 2\leq d\leq n-1, \\
                \sum_{i=1}^n (2i-n-1)a_i\overline{a}_iz^n\, ,  & \text{if } d=n, \\
              \sum_{i=1}^{2 n-d} (2n-2i-d+1)\overline{a}_{2n-d-i+1} a_{n-i+1}z^d+ \\  \quad\qquad (2n-d+1)\overline{a}_{2n-d+1}z^d\, ,  & \text{if } n+1\leq d\leq 2n-2, \\
                2\overline{a}_2z^{2n-1}\, , &   \text{if } d=2n-1, \\
                \overline{a}_1z^{2n}\, , &  \text{if } d=2n\, .
 \end{cases}
\end{equation}
\medskip

\begin{proposition} The Wronskian given in equation (\ref{Wronskian1}) is equivalent to the definition $W(z)=\sum_{d=0}^{2n} W^d(z)+(n+1)z^n$ in (\ref{WronskDef}).
\end{proposition}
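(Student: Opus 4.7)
The plan is to verify equation (\ref{WronskDef}) degree by degree: for each $d \in \{0, 1, \ldots, 2n\}$, I would collect the coefficient of $z^d$ produced by the three families of summands in (\ref{Wronskian1}) and compare the result with $W^d(z)$, treating the isolated constant $(n+1) z^n$ separately as it accounts for the extra degree-$n$ monomial at $d = n$.

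First I would catalogue each summand of (\ref{Wronskian1}) by its polynomial degree. The pure linear term $i a_i z^{i-1}$ contributes at $d = i - 1 \in \{0, \ldots, n-1\}$; the conjugate linear term $(n - i + 1)\overline{a}_{n-i+1} z^{n+i}$ contributes at $d = n + i \in \{n+1, \ldots, 2n\}$; and the conjugate-quadratic term $(i - j) a_i \overline{a}_{n-j+1} z^{i+j-1}$ contributes at $d = i + j - 1 \in \{1, \ldots, 2n-1\}$. Fixing $d$ and solving $j = d - i + 1$, the legality $1 \le i, j \le n$ restricts $i$ to $\max(1, d - n + 1) \le i \le \min(n, d)$, and the quadratic coefficient takes the form $(2i - d - 1) a_i \overline{a}_{n-d+i}$.

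Then I would sweep through the six regimes of (\ref{Wronskian2}) in order. For $d = 0$ and $d = 1$ the quadratic index range forces either an empty sum or $i = j = 1$ with vanishing coefficient $i - j = 0$, leaving only the linear contributions $a_1$ and $2 a_2 z$. For $2 \le d \le n - 1$ the linear part contributes $(d+1) a_{d+1} z^d$ and the quadratic index constraint reduces to $1 \le i \le d$, producing exactly $\sum_{i=1}^d (2i - d - 1) a_i \overline{a}_{n - d + i} z^d$. For $d = n$ both linear indices fall outside their admissible ranges, so only the quadratic sum over $1 \le i \le n$ with $j = n - i + 1$ survives, and this sum is supplemented by the separated term $(n+1) z^n$. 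The cases $d = 2n - 1$ and $d = 2n$ are mirrors of $d = 1$ and $d = 0$ and reduce identically.

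The slightly less routine range is $n + 1 \le d \le 2n - 2$, where the quadratic sum runs over $d - n + 1 \le i \le n$. Here I would perform the substitution $i \mapsto n - i + 1$ that reverses the summation order: under this change $a_i \mapsto a_{n-i+1}$, $\overline{a}_{n-d+i} \mapsto \overline{a}_{2n-d-i+1}$, and the coefficient $2i - d - 1$ becomes $2n - 2i - d + 1$, with the new summation index running from $1$ to $2n - d$, which is precisely the expression in (\ref{Wronskian2}). The conjugate linear term at $i = d - n$ then supplies the extra monomial $(2n - d + 1) \overline{a}_{2n - d + 1} z^d$. The main obstacle is purely bookkeeping: carrying the two substitutions $j = d - i + 1$ and $i \mapsto n - i + 1$ cleanly through all six regimes, keeping index ranges and multiplicities straight so that no term is dropped or counted twice.
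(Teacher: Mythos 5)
Your proposal is correct and follows essentially the same route as the paper: collect the terms of (\ref{Wronskian1}) by degree, substitute $j=d-i+1$ in the double sum, and match the resulting index ranges and coefficients against the cases of (\ref{Wronskian2}). In fact you carry out explicitly the regime $n+1\le d\le 2n-2$ (via the reversal $i\mapsto n-i+1$) that the paper dismisses as ``proved similarly,'' so your write-up is, if anything, the more complete of the two.
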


\begin{proof}
We start by inspecting all constant and linear terms in the sum (\ref{Wronskian1}):
\begin{equation*}
\sum_{i=1}^n i a_i z^{i-1} + \sum_{i=1}^n (n-i+1) \overline{a}_{n-i+1} z^{n+i} + (n+1) z^n\, .
\end{equation*}
The terms of this sum are written out in increasing order with respect to the degree of $z$ giving
\begin{equation*}
	a_1 + 2a_2z + \sum_{i=3}^n i a_i z^{i-1} + (n+1) z^n + \sum_{i=1}^{n-2} (n-i+1) \overline{a}_{n-i+1} z^{n+i} + 2\overline{a}_2 z^{2n-1} + \overline{a}_1z^{2n}\, .
\end{equation*}
Reindexing the sums results in the expression
\begin{equation*}
	a_1 + 2a_2z + \sum_{i=2}^{n-1} (i+1) a_{i+1} z^i + (n+1) z^n + \sum_{i=n+1}^{2n-2} (2n-i+1) \overline{a}_{2n-i+1} z^i + 2\overline{a}_2 z^{2n-1} + \overline{a}_1z^{2n}\, .
\end{equation*}
This sum is now identical to the sum of all linear terms in $W^d(z)$, $d=0,\ldots,2n$, and the
constant term $(n+1)z^n$.
Next quadratic terms are considered. The quadratic terms appear in three different parts in the definition of $W^d(z)$. We prove only the middle case where $d=n$, the two other cases are proved similarly. All quadratic terms in the sum (\ref{Wronskian1}) that correspond to degree $d=n$ are collected. We set $d=n=i+j-1\Leftrightarrow j=n-i+1$ in the double sum in (\ref{Wronskian1}) obtaining

\begin{align*}
 \sum_{i=1}^n \sum_{j=1}^n (i-j) a_i\overline{a}_{n-j+1} z^{i+j-1} & = 
 \sum_{i=1}^n (i-(n-i+1)) a_i\overline{a}_{n-(n-i+1)+1} z^{i+n-i+1-1} \\
 & =  \sum_{i=1}^n (2i-n-1) a_i\overline{a}_{i} z^n\, ,
\end{align*}
where the last expression is the quadratic sum in $W^d(z)$ corresponding to degree $d=n$. The cases when $2\leq d\leq n-1$ and $n+1\leq d\leq 2n-2$ are proved in a similar way, so the two definitions of $W(z)$ are equivalent.
\end{proof}

For future use we express $W^d(z)$, $2 \leq d\leq 2n-2,$ as matrix products 
\begin{equation*}
W^d(z)= w^d x^d(a)^T z^d\, ,
\end{equation*}
where

\begin{equation}\label{wddef}
\begin{aligned}
  &w^0 = w^{2n} = 1\, , w^1=w^{2n-1}=2\, , k_d := (2n-d+1)/2 \quad \text{and}\\ 
  &w^n =
  \begin{cases}
    \bigl( (2 i-n-1)_{i=1}^n\bigr)\, , & n \text{ even\, ,}\\
    \bigl( (2 i-n-1)_{i=1,\ i\ne (n+1)/2}^n\bigr)\, , & n \text{ odd\, ,}
  \end{cases}\\
  &w^d =
  \begin{cases}
    \bigl(d+1, (2i-d-1)_{i=1}^d\bigr)\, , & d \text{ even} \, ,\ 2\le d\le n-1\, ,\\
    \bigl(d+1, (2i-d-1)_{i=1,\ i\ne (d+1)/2}^d\bigr)\, , & d \text{ odd}\, ,\ 2\le d\le n-1\, ,
  \end{cases}\\
  &w^d =
  \begin{cases}
    \bigl( (2(n-i)-d+1)_{i=1}^{2n-d},2n-d+1\bigr)\, , & d \text{ even} \, ,\ n+1\le d\le 2n-2\, ,\\
    \bigl( (2(n-i)-d+1)_{\genfrac{}{}{0pt}{}{i=1,\, }{i\ne k_d}}^{2n-d},2n-d+1\bigr)\, ,& d \text{ odd}\, ,\ n+1\le d\le 2n-2\, 
  \end{cases}\\
  &x^0(a) =a_1\, ,\ x^1(a)=a_2\, ,\ x^{2n-1}(a)=\overline{a}_2\, ,\ x^{2n}(a)=\overline{a}_1\quad \text{and}\\ 
  &x^n(a) =
  \begin{cases}
    \bigl( (a_i\overline{a}_{i})_{i=1}^n\bigr)\, , & n \text{ even\, ,}\\
    \bigl( (a_i\overline{a}_{i})_{i=1,\ i\neq (n+1)/2}^n\bigr)\, , & n \text{ odd\, ,}
  \end{cases}\\
   &x^d(a) =
  \begin{cases}
    \bigl(a_{d+1},(a_i\overline{a}_{n-d+i})_{i=1}^d\bigr)\, , & d \text{ even} \, ,\ 2\le d\le n-1\, ,\\
    \bigl(a_{d+1},(a_i\overline{a}_{n-d+i})_{i=1,\ i\ne (d+1)/2}^d\bigr)\, , & d \text{ odd}\, ,\ 2\le d\le n-1\, ,
  \end{cases}\\
    &x^d(a) =
  \begin{cases}
    \bigl((\overline{a}_{2n-d+1-i} a_{n+1-i})_{i=1}^{2n-d},\overline{a}_{2n-d+1}\bigr)\, , & d \text{ even} \, ,\ n+1\le d\le 2n-2\, ,\\
    \bigl((\overline{a}_{2n-d+1-i} a_{n+1-i})_{i=1,i\ne k_d}^{2n-d},\overline{a}_{2n-d+1}\bigr)\, , & d \text{ odd}\, ,\ n+1\le d\le 2n-2\, .
  \end{cases}\\
\end{aligned}
\end{equation}

The complete variable and weight vectors, $x=x(a)$ and $w$, are then obtained by
concatenation as $x(a)=(x^0(a),x^1(a),\ldots, x^{2n}(a))$ and 
$w=(w^0, w^1, \ldots, w^{2n})$.  
From (\ref{wddef}) it is clear that $w^d = \mathbf{flip} \, w^{2n-d},\, d\neq n$ and 
$w\backslash w^n = \mathbf{flip} \, w\backslash w^n$ and $x^d(a) = \mathbf{flip} \,  \overline{x^{2n-d}(a)}$ and 
$x(a)\backslash x^n(a) = \mathbf{flip} \, \overline{x(a)\backslash x^n(a)}$.
An upper index $x^d$ refers to the subvector that corresponds to degree $d$ and a 
lower index $x_i$ to the element at position $i$.
The next step is to analyze the order of the elements in the vector $x(a)$. That is, the relationship between variable
$x_i$ and the product $a_j\overline{a}_k$.  We let a zero index indicate that a certain factor is missing and define $a_0:=1$ and
$\overline{a}_0:=1$, so then, for example, $x_1=a_1
\overline{a}_0=a_1$ and $x_{4}=a_1\overline{a}_2$. Index vectors
$I^d$ for variables $a_j$, $0\le d\le n-1$, are obtained directly from (\ref{Wronskian2}),
\begin{equation}\label{Id}
I^d =
\begin{cases}
(1)\, , & d=0\, , \\
\bigl(d+1, (i)_{i=1}^d\bigr)\, , & d \text{ even}\, , \quad 1\leq d\leq n-1,\\
\bigl(d+1, (i)_{i=1}^d\bigr)\backslash  \left(\frac{d+1}{2}\right)\, , & d \text{ odd}\, , \quad\; 1\leq d\leq n-1,
\end{cases} 
\end{equation}

and corresponding vectors for conjugate variables
$\overline{a}_k$,  $0\le d\le n-1$, are 
\begin{equation}\label{conjId}
\overline{I}^d =
\begin{cases}
(0)\, , & d=0\, , \\
\bigl(0, (n-d+i)_{i=1}^d\bigr)\, , & d \text{ even}\, , \quad 1\leq d\leq n-1,\\
\bigl(0, (n-d+i)_{i=1}^d\bigr) \backslash \left(\frac{2n-d+1}{2}\right)\, , & d \text{ odd}\, , \quad\; 1\leq d\leq n-1,
\end{cases}
\end{equation}

and for degree $n$ we have identical index vectors
\begin{equation}\label{Idn}
I^n = \overline{I}^n =
\begin{cases}
(1,2,...,n)\, , & n \text{ even}\, ,\\
(1,2,...,n)\backslash \left(\frac{n+1}{2}\right)\, , & n \text{ odd}\, .
\end{cases}
\end{equation}

Index vectors for degrees $n+1\le d\le 2n$ are given by the $ \mathbf{flip}$-operation.

\begin{lemma} \label{Revlemma}
It holds that $I^d = \mathbf{flip} \, \overline{I}^{2n - d}$ and $\overline{I}^d =  \mathbf{flip} \, I^{2n - d}$ for
 $n+1\le d\le 2n$.   
\end{lemma}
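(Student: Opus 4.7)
The plan is to reduce the lemma to the vector-level self-inversive identity $x^d(a) = \mathbf{flip}\,\overline{x^{2n-d}(a)}$ for $d \ne n$, which is already recorded immediately after equation (\ref{wddef}). The point is that the index vectors $I^d$ and $\overline{I}^d$ are nothing but a bookkeeping device for the monomial factorization of the entries of $x^d(a)$, so any symmetry relation between $x^d$ and $x^{2n-d}$ immediately translates into a symmetry relation between their index vectors.

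First I would adopt the convention $a_0 = \overline{a}_0 = 1$ already introduced before (\ref{Id}), and note that by the explicit formulas in (\ref{wddef}) together with (\ref{Id})--(\ref{Idn}), the $i$-th component of $x^d(a)$ is precisely
\[
\bigl(x^d(a)\bigr)_i \;=\; a_{I^d_i}\, \overline{a}_{\overline{I}^d_i}\, ,
\]
for every $d$ in the range $0 \le d \le n$. Let $\ell = \ell(d)$ denote the common length of $x^d(a)$ and $x^{2n-d}(a)$. Then the $i$-th entry of $\mathbf{flip}\,\overline{x^{2n-d}(a)}$ is obtained by taking the conjugate of the $(\ell-i+1)$-st entry of $x^{2n-d}(a)$, which gives
\[
\bigl(\mathbf{flip}\,\overline{x^{2n-d}(a)}\bigr)_i \;=\; \overline{a}_{I^{2n-d}_{\ell-i+1}}\, a_{\overline{I}^{2n-d}_{\ell-i+1}}\, .
\]

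Second, I would invoke the identity $x^d(a) = \mathbf{flip}\,\overline{x^{2n-d}(a)}$ for $d \ne n$ to equate the two expressions above:
\[
a_{I^d_i}\, \overline{a}_{\overline{I}^d_i} \;=\; \overline{a}_{I^{2n-d}_{\ell-i+1}}\, a_{\overline{I}^{2n-d}_{\ell-i+1}}\, .
\]
Regarding the $a_k$'s and $\overline{a}_k$'s as formal indeterminates, the factorization of a monomial of the form $a_j\,\overline{a}_k$ into one $a$-factor and one $\overline{a}$-factor is unique (the convention $a_0 = \overline{a}_0 = 1$ handles the linear entries). Reading off the two factors thus yields $I^d_i = \overline{I}^{2n-d}_{\ell-i+1}$ and $\overline{I}^d_i = I^{2n-d}_{\ell-i+1}$, which is exactly $I^d = \mathbf{flip}\,\overline{I}^{2n-d}$ and $\overline{I}^d = \mathbf{flip}\,I^{2n-d}$.

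The only genuine work is confirming, case by case from (\ref{wddef}), that each entry of $x^d(a)$ admits the declared $a_j\overline{a}_k$ decomposition; this is a routine inspection of the even/odd sub-cases for $2 \le d \le n-1$ and $n+1 \le d \le 2n-2$, together with the four boundary degrees $d \in \{0,1,2n-1,2n\}$. The potential pitfall would be the degree $d = n$, where entries are of the form $a_i\overline{a}_i$ and the two index factors coincide, but the lemma explicitly excludes this case by restricting to $n+1 \le d \le 2n$, for which $2n-d \le n-1$, so the index vectors on both sides are governed by (\ref{Id}) and (\ref{conjId}) rather than (\ref{Idn}).
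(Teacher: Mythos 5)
Your proof is correct, but it follows a genuinely different route from the paper's. The paper argues by direct computation: it sets $d=n+k$, computes $\mathbf{flip}\,\overline{I}^{\,n-k}$ explicitly from (\ref{conjId}) via a change of summation index, reads off $I^{n+k}$ from the case $n+1\le d\le 2n-2$ of (\ref{Wronskian2}), and checks that the two sequences coincide, with the boundary degrees $d=2n-1,2n$ handled separately. You instead reduce the lemma to the self-reversive identity $x^d(a)=\mathbf{flip}\,\overline{x^{2n-d}(a)}$ recorded after (\ref{wddef}), combined with the observation that the index vectors are recovered from the entries of $x^d(a)$ by the unique factorization of each monomial into one $a$-factor and one $\overline{a}$-factor (the convention $a_0=\overline{a}_0=1$ absorbing the linear entries). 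This is logically sound --- that identity is asserted in the paper before the index vectors are introduced, so there is no circularity --- and it is the more conceptual argument: it exhibits the index symmetry as nothing but the shadow of the self-reversive structure of the variable vector. What it does not buy is a saving of computation: the identity $x^d(a)=\mathbf{flip}\,\overline{x^{2n-d}(a)}$ is itself only declared ``clear from (\ref{wddef})'' in the paper, and verifying it requires essentially the same even/odd and low/high-degree case inspection that the paper's direct proof performs, so the case analysis is relocated rather than removed. One point to make explicit: for $n+1\le d\le 2n$ the paper gives no closed formula for $I^d$ and $\overline{I}^d$, so your argument implicitly (and, matching the paper's own usage in its proof, correctly) takes these index vectors to be \emph{defined} by reading off the factor indices from (\ref{wddef}); under that reading your left-hand decomposition $\bigl(x^d(a)\bigr)_i=a_{I^d_i}\,\overline{a}_{\overline{I}^d_i}$ is tautological for those degrees and the argument closes.
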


\begin{proof}
It follows directly from the two first and last equations of (\ref{Wronskian2}) that the formulas hold for $d=2 n$ and $d=2n -1$. Let $d=n+k$ for some $k$, $1\le k\le n-2$, and suppose that $d$ is even. Then $ \overline{I}^{2n - (n+k)} =  \overline{I}^{n-k} =  (0, (n-(n-k)+i)_{i=1}^{n-k}) = (0, (k+i)_{i=1}^{n-k})\, .$  Reversion gives $\mathbf{flip} \, \overline{I}^{n - k} =   ((k+i)_{i=n-k}^{1}, 0)$, equalling $((n-j+1)_{j=1}^{n-k}, 0)$ after the change of indices $j=n-k-i+1$. From (\ref{Wronskian2}), case $n+1\le d\le 2n-2$, it follows that $I^{n+k} =   ((n-i+1)_{i=1}^{2n-(n+k)}, 0) = ((n-i+1)_{i=1}^{n-k}, 0)$. The case for odd $d$ is proved in the same way using the last expression in (\ref{conjId}). Thus $I^d = \mathbf{flip} \, \overline{I}^{2n - d}$ for $n+1\le d\le 2n-2\, .$ The other identity is proved analogously.
\end{proof}

From (\ref{Id})-(\ref{Idn}) and Lemma \ref{Revlemma} it follows that the number of elements 
in the index vectors for $0\leq d\leq n-1$ are
\begin{equation*}
| I^d| = | I^{2n-d}| = | \overline{I}^d|= | \overline{I}^{2n-d}| =
2 \left\lfloor\frac{d}{2}\right\rfloor +1\quad \text{and}\quad
 | I^n| = | \overline{I}^n| = 2 \left\lfloor\frac{n}{2}\right\rfloor\, . 
\end{equation*}
The complete index vectors for
variables $a$ and conjugate variables $\overline{a}$ are then
given by the concateneted vectors
\begin{equation*}
\mathcal{I}=(I^0,I^1,...,I^{2n}) \qquad\text{and}\qquad \overline{\mathcal{I}}=(\overline{I}^0, \overline{I}^1,..., \overline{I}^{2n})\, .
\end{equation*}
The total number of elements in
$\mathcal{I}$ and $\overline{\mathcal{I}}$ is $n^2 + n$. Let $\mathcal{J} = (\mathcal{J}_0,\ldots,\mathcal{J}_n)$ be a vector where $\mathcal{J}_i$, $i=1,\ldots,n$, indicates the position of variable $a_i$ in $x(a)$. Also define $\mathcal{J}_0:=0$, $x_0 := a_0$ and 
$\overline{x}_0 := \overline{a}_0$.

\begin{proposition} \label{indexJ}
	The coefficients $a_i$, $i=1,\ldots, n$, of the Blaschke product are located at positions
	\begin{equation}\label{Jindex}
	\mathcal{J}_i=\Big\lceil \frac{(i-1)^2}{2}\Big\rceil +1
	\end{equation}
	in the variable vector $x(a)$, that is $a_i= x_{\mathcal{J}_i}(a)$.
\end{proposition}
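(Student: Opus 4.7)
The plan is to read off the location of each $a_i$ inside the subvectors $x^d(a)$ from the definitions in (\ref{wddef}), then convert that local position to a global position in the concatenated vector $x(a)$ by summing lengths.

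First I would observe that for every $i$ with $1\le i\le n$, the variable $a_i$ appears as the \emph{first} (linear-term) entry of the block $x^{i-1}(a)$. This is visible directly from (\ref{wddef}): the cases $d=0$ and $d=1$ give $x^0(a)=a_1$ and $x^1(a)=a_2$, while for $2\le d\le n-1$ both the even and odd subcases begin with $a_{d+1}$. Hence $a_i$ is located at the first slot of $x^{i-1}(a)$ for $1\le i\le n$.

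Next, since $x(a)=(x^0(a),x^1(a),\ldots,x^{2n}(a))$, the global index of that first slot is
\begin{equation*}
\mathcal{J}_i = 1+\sum_{d=0}^{i-2} |x^d(a)|.
\end{equation*}
The lengths $|x^d(a)|=|I^d|=2\lfloor d/2\rfloor+1$ for $0\le d\le n-1$ are already recorded in the paragraph following (\ref{Idn}), so I may just substitute them in.

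The remaining step is the purely arithmetic identity
\begin{equation*}
\sum_{d=0}^{i-2}\bigl(2\lfloor d/2\rfloor+1\bigr)=\Bigl\lceil\frac{(i-1)^2}{2}\Bigr\rceil,
\end{equation*}
which I would verify by splitting on the parity of $i-1$. Writing $i-1=2m$ gives $\sum_{d=0}^{2m-1}(2\lfloor d/2\rfloor+1)=2m+2\cdot 2\cdot\frac{(m-1)m}{2}=2m^2$, while $i-1=2m+1$ gives $(2m+1)+2m(m-1)+2m=2m^2+2m+1$; in both cases the value equals $\lceil(i-1)^2/2\rceil$. Adding the leading $1$ yields the claimed formula.

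The proof is mostly bookkeeping; the only point that requires care is making sure that in each parity subcase of (\ref{wddef}) the linear term $a_{d+1}$ is listed first in $x^d(a)$ (so that the simple ``first slot'' description of $a_i$ holds uniformly), and that the length formula $|I^d|=2\lfloor d/2\rfloor+1$ is correctly applied only for the range $0\le d\le i-2\le n-2$, where it is valid.
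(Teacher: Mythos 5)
Your proof is correct and rests on the same two facts as the paper's: that $a_{d+1}$ is listed first in each block $x^d(a)$ of (\ref{wddef}) and that $|x^d(a)|=2\lfloor d/2\rfloor+1$ for $0\le d\le n-1$. The only (cosmetic) difference is that you evaluate the cumulative sum $1+\sum_{d=0}^{i-2}|x^d(a)|$ in closed form by parity, whereas the paper establishes the same closed form by induction via the recursion $\mathcal{J}_{i+1}=\mathcal{J}_i+|x^{i-1}(a)|$; your parity computations check out.
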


\begin{proof}
The proof is by induction for $0\leq d \leq n-1$. Index $i$ corresponds to degree $d+1$. The formula (\ref{Jindex}) holds for $i=1$, since $\mathcal{J}_1=\lceil \frac{(1-1)^2}{2}\rceil +1=1$ and $a_1=x_{\mathcal{J}_1}(a)=x_1(a)$. Now we assume that the formula holds for $i\ge 1$.  If $i$ is even,  then $d=i-1$ is odd and the vector $x^d(a)$ in  (\ref{wddef}) contains $d$ elements. The first element in each vector $x^d(a)$ is the coefficient $a_{d+1}$, so
\begin{equation*}
  \mathcal{J}_{i+1}=\mathcal{J}_i+d=\Big\lceil \frac{(i-1)^2}{2} \Big\rceil +1 + i-1=\frac{(i-1)^2}{2}+\frac{1}{2}+i=\frac{i^2}{2}+1=\Big\lceil \frac{i^2}{2} \Big\rceil +1, 
\end{equation*}
where we in step three used the fact that $i-1$ is odd and in the last step that $i$ is even. If $i$ is odd, then $d=i-1$ is even and the vector $x^d(a)$ contains $d+1$ elements. Then
\begin{equation*}
	\mathcal{J}_{i+1}=\mathcal{J}_i+d+1=\Big\lceil \frac{(i-1)^2}{2} \Big\rceil +1 + i=\frac{(i-1)^2}{2}+1+i=\frac{i^2+1}{2}+1=\Big\lceil \frac{i^2}{2} \Big\rceil +1, 
\end{equation*}
where we used the fact that $i$ is odd in the last step. We conclude that the formula (\ref{Jindex}) holds for all $i=1,\ldots,n$.
\end{proof} 
 \medskip

\noindent{\bf Example 1.}
Index vectors are computed for $n=3$ according to (\ref{Id})-(\ref{Idn}) and
Lemma \ref{Revlemma}. This results in
$I^0=1,\; I^1=2,\; I^2=(3,1,2),\; I^3=(1,3), \; I^4=(3,2,0), \; I^5=I^6=0$ and 
$\overline{I}^0=\overline{I}^1=0,\; \overline{I}^2=(0,2,3),\; \overline{I}^3=(1,3),\; \overline{I}^4=(2,1,3),\; \overline{I}^5=2,\;
\overline{I}^6=1$.
Concatenation gives complete index vecors $\mathcal{I}=(1,2,3,1,2,1,3,3,2,0,0,0)$ and $\overline{\mathcal{I}}=(0,0,0,2,3,3,1,2,1,3,2,1)$.
The length of the variable vector is $n^2+n=12$ and $\mathcal{J}=(0,1,2,3)$,
 where indexing starts from 0. The connection between variable $x_i$ and $a_j\overline{a}_k$ is explicitely given by the products
\begin{equation*}
x_i = a_{\mathcal{I}_i}\,
\overline{a}_{\overline{\mathcal{I}}_i},\quad i=1,...,12\, .
\end{equation*}

After applying (\ref{Jindex}) we obtain $x_i = a_{\mathcal{I}_i}\,\overline{a}_{\overline{\mathcal{I}}_i}=
x_{\mathcal{J}_{\mathcal{I}_i}} \,\overline{x}_{\mathcal{J}_{\overline{\mathcal{I}}_i}}$.
Some examples:
\begin{align*}
x_2 &=x_{\mathcal{J}_{\mathcal{I}_2}} \, \overline{x}_{\mathcal{J}_{\overline{\mathcal{I}}_2}} =
x_{\mathcal{J}_2}\, \overline{x}_{\mathcal{J}_0} = a_2 \overline{a}_0 = a_2\, , \\
x_5 &=x_{\mathcal{J}_{\mathcal{I}_5}} \, \overline{x}_{\mathcal{J}_{\overline{\mathcal{I}}_5}} =
		x_{\mathcal{J}_2}\, \overline{x}_{\mathcal{J}_3} = a_2 \overline{a}_3\, , \\
x_6 &=x_{\mathcal{J}_{\mathcal{I}_6}} \, \overline{x}_{\mathcal{J}_{\overline{\mathcal{I}}_6}} =
		x_{\mathcal{J}_1}\, \overline{x}_{\mathcal{J}_1} = a_1 \overline{a}_1 
		= | a_1 |^2\, , \\
x_{10} &=x_{\mathcal{J}_{\mathcal{I}_{10}}} \, \overline{x}_{\mathcal{J}_{\overline{\mathcal{I}}_{10}}} =
		x_{\mathcal{J}_0}\, \overline{x}_{\mathcal{J}_3} = a_0 \overline{a}_3 = 
		 \overline{a}_3\, . \\
\end{align*}

The next proposition describes the construction of particular
solutions to (\ref{Relaxedsystem}) and to the associated
homogeneous system $A x = 0$.  

\begin{proposition} \label{Propalphabeta}
Let $A x(a) = b$ be the description of the nonlinear system
(\ref{System1}) and
let $A x = b$ be the corresponding relaxed linear system
(\ref{Relaxedsystem}). A particular solution $\alpha$ to
(\ref{Relaxedsystem}) and a non-trivial solution $\beta$ to
$A x = 0$ can be constructed using the fast Fourier
transform and the Wronskian (\ref{blaWronskian1}).
\end{proposition}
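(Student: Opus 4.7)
I would start by reinterpreting the relaxed system as a polynomial interpolation problem. Grouping the $k$-th row of $Ax=b$ by degree through (\ref{wddef}), it reads
\begin{equation*}
\sum_{d=0}^{2n}(w^d\cdot x^d)\, z_k^d \;=\; -(n+1)\, z_k^n,\qquad k=1,\dots,2n.
\end{equation*}
Setting $c_d:=w^d\cdot x^d$, the system becomes the interpolation requirement that the polynomial $Q(z):=\sum_{d=0}^{2n}c_d z^d$ satisfy $Q(z_k)=-(n+1)z_k^n$ at the $2n$ distinct nodes $z_1,\dots,z_n,\,1/\overline{z}_1,\dots,1/\overline{z}_n$; equivalently, $Q(z)+(n+1)z^n$ must vanish at all of them.

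Since $Q(z)+(n+1)z^n$ has degree at most $2n$ and is required to vanish at $2n$ distinct points, it must be a scalar multiple of
\begin{equation*}
p(z)\;:=\;\prod_{j=1}^{n}(z-z_j)(z-1/\overline{z}_j),
\end{equation*}
which is precisely the Wronskian skeleton from (\ref{blaWronskian1}) in the present simple-critical-point setting (all $n_j=1$, $n_0=0$, the unimodular prefactor stripped). Its coefficient vector $(p_0,\dots,p_{2n})$ is obtained by multiplying the $n$ known quadratic factors, a task for which FFT-based polynomial convolution is the standard fast method. This is how the two tools named in the proposition, the Wronskian formula (\ref{blaWronskian1}) and the fast Fourier transform, both enter the construction.

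The desired $\alpha$ and $\beta$ then arise from the two natural choices of the scalar multiplier, followed by a degree-by-degree lift of the coefficients $c_d$ back to the $x$-variables. For $\alpha$, take multiplier $0$: then $c_d=0$ for $d\neq n$ and $c_n=-(n+1)$. Set $\alpha^d=0$ for every $d\neq n$, and pick any $\alpha^n$ with $w^n\cdot\alpha^n=-(n+1)$; such a choice is available because every entry of $w^n$ listed in (\ref{wddef}) is nonzero (the would-be vanishing index $i=(n+1)/2$ is either non-integral or explicitly excluded). For $\beta$, take multiplier $1$: then $c_d=p_d$, and for each $d$ one picks $\beta^d$ with $w^d\cdot\beta^d=p_d$, again possible because no $w^d$ has all-zero entries. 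Direct substitution gives $(A\alpha)_k=c_n z_k^n=-(n+1)z_k^n=b_k$ and $(A\beta)_k=\sum_d p_d z_k^d=p(z_k)=0$, while $p_{2n}=1$ yields non-triviality of $\beta$.

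The proof is essentially a polynomial-identity argument paired with an explicit lift, so there is no genuine analytic obstacle; the only thing that requires care is keeping the lift of each $c_d$ consistent with the degree-by-degree bookkeeping of (\ref{wddef}), for which a single-nonzero-component choice in each $x^d$ block already suffices.
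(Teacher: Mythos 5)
Your argument is correct and proves the proposition, but the vectors it produces are not the ones the paper constructs, and the difference matters downstream. The shared core is the same: both proofs reduce $Ax=b$ to the statement that the degree-wise contractions $c_d=w^d\cdot x^d$ are the coefficients of a degree-$2n$ polynomial interpolating $-(n+1)z^n$ at the $2n$ nodes, hence differing from $-(n+1)z^n$ by a scalar multiple of $\prod_j(z-z_j)(z-1/\overline z_j)$, whose coefficients are computed by FFT. Where you diverge is the lift from the $c_d$ back to $x$. The paper lifts into the \emph{linear} slots: for $\alpha$ it zeroes every quadratic entry and solves $i\,a_i=c_{i-1}$ against the scaled Wronskian $W_1$ (so $\alpha$ is the linear part of an actual $x(a)$, is self-reversive, and is the $\hat\alpha$ later used as the solver's initial point); for $\beta$ it keeps that same linear part and adds the diagonal quadratic entries $a_i\overline a_i$ at degree $n$, chosen by (\ref{beta_n}) so that the degree-$n$ contraction contributes exactly $n+1$. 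Your lift instead places one nonzero entry per degree block wherever $w^d$ is nonzero, making $\alpha$ data-independent (multiplier $0$) and $\beta$ carried entirely by $p$; this is cleaner as a pure existence proof, and your observation that every listed entry of every $w^d$ is nonzero is exactly the right justification. What it loses: your $\alpha$ and $\beta$ need not satisfy the self-reversive identities of Remark 2, which the paper uses in the structural proposition for (\ref{FullSystem}); and your $\alpha$, supported only on the degree-$n$ purely quadratic block, corresponds to $a=0$ and would be a poor initial point, whereas the paper stresses that its particular $\hat\alpha$ is what steers the solver toward Blaschke products rather than Blaschke forms. Your $\beta$ does retain the one property Proposition \ref{nullspace} actually needs, namely $\beta_1=p_0=\prod_j z_j/\overline z_j\neq 0$, since the degree-$0$ block has a single slot. (Both your $\alpha$ and the paper's $\beta$ degenerate at $n=1$, where the degree-$n$ quadratic block is empty; that trivial case is outside the paper's effective scope.)
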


\begin{proof} 1. The proof is by construction. First we determine
a particular solution $\alpha$ to (\ref{Relaxedsystem}). We seek
a solution were we put elements in $x = x(a)$ that are of the
form $a_i \overline{a}_{n-j+1}$ in (\ref{System1}) to zero. Thus
 (\ref{System1}) reduces to
\begin{equation}\label{Sysred}
\sum_{i=1}^n i a_i z_k^{i-1} + \sum_{i=1}^n (n-i+1)
\overline{a}_{n-i+1} z_k^{n+i} +
(n+1) z_k^n = 0,
\end{equation}
from which it is clear that we seek a polynomial of degree $2 n$
with its zeros at the $2 n$ distinct critical points and with
coefficient $n+1$ for $z^n$. Define $W_0$ by
\begin{equation*}
W_0(z) := \prod_{j=1}^n (z-z_j) (z-\frac{1}{\overline z_j})\, ,
\end{equation*}
which is the Wronskian in (\ref{blaWronskian1}) with $r=1$ and
$\theta = 0$. Expanding $W_0$ we obtain
\begin{equation*}
  W_0(z) = b_0 + b_1 z +\ \ldots\ + b_{2n-1} z^{2 n - 1} + z^{2n}\, .
\end{equation*}
The coefficients $b_j$ can be computed by sampling $W_0$ on the
unit circle in $2^l$ equidistant points stored in the vector $f =
(W_0(e^{\frac{k 2 \pi i}{2^l}}))_{k=0}^{2^l-1}$. Then the fast
Fourier transform, ($b = \hbox{fft}(f)/2^l$ in Matlab), supplies
us with a vector $b$ containing the coefficients of $W_0$. Thus we
get the correct scaling by defining $c_j = \frac{(n+1) b_j}{b_n}\,
,\ j=0,\ldots, 2 n - 1$ and $c_{2n} = \frac{(n+1)}{b_n}$, giving
the representation
\begin{equation}\label{PolW1}
  W_1(z) = c_0 + c_1 z +\ \ldots\ + c_{2n} z^{2n}\, .
\end{equation}
Then $W_1(z)$ in (\ref{PolW1}) is a self-inversive polynomial
satisfying 
 (\ref{Sysred}) and we determine $a_i$ from the equations $i a_i = c_{i-1},\ i=1,\ldots,n$ and
   put $\alpha := x(a)$. Then $\alpha$ is a particular
   solution to the relaxed linear system $Ax=b$, but not
   to the nonlinear system $A x(a) = b$, since the quadratic
   elements in $x(a)$ were set to zero.

  2. Next we construct a non-trivial solution
   $\beta$ to the homogeneous system $A x = 0$, (the constant term in (\ref{System1}) is omitted).
    Variables in $x = x(a)$ of the form $a_i \overline{a}_{n-j+1}$, $i\ne n-j+1$, in (\ref{System1}) 
    are now put to zero, so that the double sum of the nonlinear terms in (\ref{System1}) is reduced to
 \begin{equation}\label{redsum}
 \Bigl(\sum_{i=1}^n (2 i - n - 1) a_i\overline{a}_{i}\Bigr) z_k^{n}\, .
 \end{equation}
 Notice that if $n$ is odd the middle term is zero in (\ref{redsum}) and the variable $a_i\overline{a}_{i}$ with $i = \left\lceil n/2 \right\rceil$ is omitted from $x(a)$. We select  the nonlinear variables $a_i\overline{a}_{i}$, $i=1,\ldots,\left\lfloor n/2 \right\rfloor,  \left\lceil n/2 \right\rceil +1,\ldots , n$, in $x(a)$ by defining
 \begin{equation}\label{beta_n}
a_i\overline{a}_{i} :=
\begin{cases}
-\frac{3 (n+1-2 i)}{n (n-1)}\, , & i = 1,\ldots, \left\lfloor n/2 \right\rfloor\, ,\\
   - a_{n + 1 - i}\overline{a}_{n + 1 - i}\, ,&  i = \left\lceil n/2 \right\rceil +1,\ldots,n\, .
\end{cases}
\end{equation}
  Then the sum in (\ref{redsum}) is equal to $(n+1)$ and the system (\ref{System1}) with deleted constant term is reduced to the system
 \begin{equation*}
 \sum_{i=1}^n i a_i z_k^{i-1} + \sum_{i=1}^n (n-i+1)
 \overline{a}_{n-i+1} z_k^{n+i} + (n+1) z_k^n = 0\, ,
 \end{equation*}
 which is identical to system (\ref{Sysred}), so we can use the coefficients of the polynomial
 $W_1$ in (\ref{PolW1}) and again determine $a_i$ from $i a_i = c_{i-1},\
 i=1,\ldots,n$. Defining $\beta := x(a)$, we have constructed a nontrivial solution to the homogeneous 
 system $A x = 0$ corresponding to the relaxed linear system (\ref{Relaxedsystem}).
 \end{proof}
\noindent{\bf Remark 1}. A very important observation in the second part of the proof is that the first element
 $\beta_1$ in the solution $\beta$ is always nonzero, (a consequence
 of the fact that all the distinct critical points are nonzero),
 which later on will guarantee that $\beta$ is linearly
 independent of the data independent vectors which together with $\beta$ will form a basis for the null space of the matrix $A$.
 
\noindent{\bf Remark 2}. The particular solution $\alpha$ is self-reversive by construction and the data dependent null space vector $\beta$ is
self-reversive after deletion of elements that correspond to
degree $n$ , (\ref{beta_n}), that is $\alpha = \mathbf{flip}\, \overline{\alpha}$ and 
$\beta \backslash \beta^n  = \mathbf{flip}\, \overline{\beta\backslash \beta^n}$.
\medskip

From now on when using $W(z)$ we always refer to definition (\ref{Wronskian2}).
\medskip

\noindent{\bf Example 2.}
The structure of the quadratic equation system is illustrated for $n=3$. The Ansatz is
\begin{equation*}
B(z) = \frac{a_1 z + a_2 z^2 + a_3 z^3 + z^4}{1 + \overline{a}_3 z + \overline{a}_2 z^2 + \overline{a}_1 z^3}.
\end{equation*}

The Wronskian parts $W^d(z)$ are computed for $d=0$ to $d=6$.
\begin{align*}
& W^0(z) = a_1,\; W^1(z)=2a_2 z,\; W^2(z)=3a_3 z^2 - a_1\overline{a}_2 z^2 + a_2\overline{a}_3 z^2\, , \\
& W^3(z) = -2a_1\overline{a}_1 z^3 + 2a_3\overline{a}_3 z^3 + 4z^3\, ,\\
& W^4(z) = \overline{a}_2 a_3 z^4  - \overline{a}_1 a_2 z^4  + 3\overline{a}_3 z^4,\; W^5(z) = 2\overline{a}_2 z^5,\; W^6(z) = \overline{a}_1 z^{2n}\, .
\end{align*}

The system $W(z_k)=0$, $k=1,\ldots,6$, can be expressed in relaxed linearized
form $Ax=b$, where $x$, with the quadratic structure embedded,
 and $b$ are given by
\begin{align*}
 x &= x(a) = \bigl(a_1, a_2, a_3 , a_1\overline{a}_2, a_2\overline{a}_3, a_1\overline{a}_1, a_3\overline{a}_3,\overline{a}_2 a_3, \overline{a}_1 a_2, \overline{a}_3, \overline{a}_2, \overline{a}_1\bigr)^T\, ,\\
 b &=  -4\, \bigl(z_1^3, z_2^3,z_3^3,z_4^3,z_5^3,z_6^3\bigr)^T\, .
\end{align*}
 The matrix $A$ is of size $6\times 12$ in our relaxed linear system;

\begin{equation*}
        \underbrace{
        \left[
        \begin{array}{cccccccccccc}
            1 & 2z_1 & 3z_1^2 & -z_1^2 & z_1^2 & -2z_1^3 & 2z_1^3 & z_1^4 & -z_1^4 & 3z_1^4 & 2z_1^5 & z_1^6  \\
            \vdots &  &  &  &  &  & \vdots &  &  &  &  & \vdots \\
            1 & 2z_6 & 3z_6^2 & -z_6^2 & z_6^2 & -2z_6^3 & 2z_6^3 & z_6^4 & -z_6^4 & 3z_6^4 & 2z_6^5 & z_6^6
            \end{array} \right]}_{A} x = b\, .
\end{equation*}
Since $z_k$ are distinct points rank $ A = 6$ and $\dim N(A) = 12 - 6 = 6$. A set of nullspace vectors of $A$ is, for example, given by the columns of the matrix

\begin{equation*}
C = 
 \begin{tikzpicture} [baseline=(m.center)]
      \matrix [matrix of math nodes,left delimiter= {[ } ,right delimiter={]^T \, , }] (m)
        {       
            0 & 0 & 1 & 3 & 0 & 0 & 0 & 0 & 0 & 0 & 0 & 0 \\
            0 & 0 & 0 & 1 & 1 & 0 & 0 & 0 & 0 & 0 & 0 & 0 \\
            0 & 0 & 0 & 0 & 0 & 1 & 1 & 0 & 0 & 0 & 0 & 0 \\
            0 & 0 & 0 & 0 & 0 & 0 & 0 & 1 & 1 & 0 & 0 & 0 \\
            0 & 0 & 0 & 0 & 0 & 0 & 0 & 0 & 3 & 1 & 0 & 0 \\
        };
        \draw[color=black] (m-1-3.north west) -- (m-1-5.north east) -- (m-2-5.south east) -- (m-2-3.south west) -- (m-1-3.north west);
         \draw[color=black] (m-3-6.north west) -- (m-3-7.north east) -- (m-3-7.south east) -- (m-3-6.south west) -- (m-3-6.north west);
          \draw[color=black] (m-4-8.north west) -- (m-4-10.north east) -- (m-5-10.south east) -- (m-5-8.south west) -- (m-4-8.north west);
    \end{tikzpicture} 
\end{equation*}

where the blocks correspond to degrees 2, 3 and 4. 
With $\alpha$ and $\beta$ from Proposition \ref{Propalphabeta},  an affine description of
the solution space is $x=\alpha + Ct + \beta t_\beta$, where 
$\alpha, \beta \in \mathbb{C}^{12}$, $C\in \mathbb{R}^{12\times 5}$ and 
$t\in \mathbb{C}^5$ and $t_\beta \in \mathbb{C}$ contain arbitrary complex weights.
\bigskip

A recipe is now given that allows for the construction of a
complete null space matrix $C$ that corresponds to an arbitrary
number $n$ of distinct prescribed critical points $z_k$. The matrix $C$ is built from blocks that
correspond to different degrees. In the following two propositions the null space of vector $w^d$ defined in (\ref{wddef}) is described. We note that the dimension is zero for the null spaces that correspond to the degrees $0, 1, 2n-1$ and $2n$ . A zero column vector with $d$ elements is denoted by $0^d$.
\begin{proposition}\label{propEven}
	Let $d$ be even, $2\leq d\leq 2n-2$, and $w^d$ defined by (\ref{wddef}).
	
	a) For $2\leq d\leq n-1$ and $2\le k\le d$ we define
	\begin{equation*}
		v_1^d = \begin{pmatrix} d-1 \\ d+1 \\ 0^{d-1} \end{pmatrix} \quad\text{and}\quad
			v_k^d= \begin{pmatrix} 0^{k-1} \\ -d+2k-1 \\ d-2k+3 \\ 0^{d-k} \end{pmatrix}\, .	
	\end{equation*}
	The null space of $w^d$ is
	 \begin{equation*}
	 N(w^d) = \mathrm{span} \{ v_1^d,v_2^d,\ldots,v_d^d \}\quad\text{and}\quad \mathrm{dim}\,     
	 N(w^d)=d\, .
	 \end{equation*}
	 
	b) For $n$ even and $2\leq k\leq n$ we define
	\begin{equation*}
			v_k^n= \begin{pmatrix} 0^{k-2} \\ -n+2k-1 \\ n-2k+3 \\ 0^{n-k} \end{pmatrix}\, .	
	\end{equation*}
	The null space of $w^n$ is 
	\begin{equation*}
	N(w^n)=\mathrm{span} \{ v_2^n,\ldots,v_n^n \}\quad\text{and}\quad 
	 \mathrm{dim} \, N(w^n)=n-1\, .
	 \end{equation*}
	 c) For $n+1\leq d\leq 2n-2$ and $1\leq k\leq 2n - d$ we define
	 $ v_k^d = \mathbf{flip} \, v_{2n-d-k+1}^{2n-d}$.	 
	 The null space of $w^d$ is
	 \begin{equation*}
	 N(w^d)=\mathrm{span} \{ v_1^d,v_2^d,\ldots,v_{2n-d}^d \}\quad\text{and}\quad \mathrm{dim}\,     
	 N(w^d)=2n-d\, .
	 \end{equation*}
\end{proposition}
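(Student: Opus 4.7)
My plan is to verify (a) by a direct two-term inner-product calculation together with a staircase argument for linear independence, to treat (b) analogously with a shifted support, and to deduce (c) from the symmetry $w^d = \mathbf{flip}\, w^{2n-d}$ recorded right after (\ref{wddef}).

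For (a), from (\ref{wddef}) the entries of $w^d$ at positions $p=1,\ldots,d+1$ are $d+1$ when $p=1$ and $2p-d-3$ when $p\ge 2$. A direct calculation gives $w^d\cdot v_1^d=(d+1)(d-1)+(1-d)(d+1)=0$. For $2\le k\le d$ the vector $v_k^d$ is supported at positions $k$ and $k+1$, so using $-d+2k-1=2k-d-1$ I would factor
\begin{equation*}
w^d\cdot v_k^d=(2k-d-3)(-d+2k-1)+(2k-d-1)(d-2k+3)=(2k-d-1)\bigl[(2k-d-3)+(d-2k+3)\bigr]=0.
\end{equation*}
For linear independence, in a relation $\sum_{k=1}^d c_k v_k^d=0$ the support pattern is a staircase: position $d+1$ forces $c_d=0$, position $d$ forces $c_{d-1}=0$, and so on down to position $3$ forcing $c_2=0$, after which position $1$ gives $c_1=0$. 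Since $w^d$ is a nonzero row vector of length $d+1$, its null space has dimension exactly $d$, matching the number of basis vectors.

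For (b), the vector $w^n$ has $n$ entries, the $p$-th being $2p-n-1$, all nonzero since $n$ is even. For $2\le k\le n$, $v_k^n$ is supported at positions $k-1$ and $k$, and the same telescoping yields $w^n\cdot v_k^n=(2k-n-1)\bigl[(2k-n-3)+(n-2k+3)\bigr]=0$. The staircase argument again gives linear independence of $v_2^n,\ldots,v_n^n$, and $\dim N(w^n)=n-1$ matches. For (c), the identity $w^d=\mathbf{flip}\, w^{2n-d}$ implies $w^d\cdot u=w^{2n-d}\cdot(\mathbf{flip}\, u)$ for every vector $u$ of the appropriate length, so $u\mapsto\mathbf{flip}\, u$ is a bijection $N(w^d)\to N(w^{2n-d})$. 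Since $d$ is even and $n+1\le d\le 2n-2$, the index $2n-d$ is even and lies in $[2,n-1]$, so part (a) supplies a basis $\{v_k^{2n-d}\}_{k=1}^{2n-d}$ of $N(w^{2n-d})$. Flipping and relabeling via the stated $v_k^d:=\mathbf{flip}\, v_{2n-d-k+1}^{2n-d}$ then produces the claimed basis, with $\dim N(w^d)=2n-d$.

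The only real obstacle is careful bookkeeping of the non-standard indexing of $w^d$, whose leading entry $d+1$ stands apart from the remaining $d$ entries in the listing of (\ref{wddef}); once the positions of the supports of each $v_k^d$ inside $w^d$ are aligned correctly, the inner products collapse via a single two-term telescoping and linear independence is immediate from the staircase pattern.
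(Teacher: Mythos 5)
Your proof is correct and follows essentially the same route as the paper's: a direct two-term inner-product verification using the explicit entries of $w^d$, linear independence from the staircase support pattern, a dimension count from $w^d$ being a nonzero row vector of the appropriate length, and the symmetry $w^d=\mathbf{flip}\,w^{2n-d}$ to transfer the basis in part (c). Your explicit staircase argument and the null-space bijection $u\mapsto\mathbf{flip}\,u$ merely spell out steps the paper treats as clear from the construction.
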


\begin{proof}
a) For even $d$ the vector $w^d$ has $d+1$ elements. Then $w^d\in \mathbb{R}^{d+1}$, so its range has dimension 1 and $\mathrm{dim}\, N(w^d) = d$. The first vector $v_1^d$ belongs to the null space:
\begin{equation*}
	w^d \begin{pmatrix} d-1 \\ d+1 \\ 0^{d-1} \end{pmatrix} = (d+1)(d-1)+(-d+1)(d+1)=0.
\end{equation*}
Next consider vector $v_k^d$ with nonzero elements at positions $k$ and $k+1$.
The first element in $w^d$ is $d+1$ and the elements at positions $k$ and $k+1$ are $-d+2k-3$ and $-d+2k-1$, respectively. Then
\begin{equation*}
	w^d v_k^d = 0+(-d+2k-3)(-d+2k-1)+(-d+2k-1)(d-2k+3)+0=0\, ,
\end{equation*}
so $v_k^d$ is also in the null space. It is clear from the construction that $\{v_1^d,\ldots,v_d^d\}$ is a linearly independent set of vectors. Thus the set spans the null space of $w^d$ for $2\le d\le n-1$. The case b) is proved analogously.

c) Consider the product $w^d v_k^d$ for some $n+1 \leq d \leq 2n-2$ and 
$1\leq k \leq 2n-d$. From case a) it follows that $w^{2n-d} v_k^{2n-d} =0$,  
or  $w^{2n-d} v_{2n-d-k+1}^{2n-d} =0$ if the order of the vectors in 
$N(w^{2n-d})$ is reversed. The inner product is unaffected by 
reversion of both vectors so  $\mathbf{flip}\, w^{2n-d} \mathbf{flip} \, v_k^{2n-d} =0$. Then
\begin{equation*}
w^d v_k^d = w^d \mathbf{flip} \, v_{2n-d-k+1}^{2n-d} =
\mathbf{flip} \, w^{2n-d} \mathbf{flip} \, v_{2n-d-k+1}^{2n-d} = 0\, , 
\end{equation*} 
and vector $v_k^d = \mathbf{flip}\, v_{2n-d-k+1}^{2n-d}$ belongs to $N(w^d)$.
Clearly, by construction, all $v_k^d$ are linearly independent so the nullspace for degree $d$,  $n+1\leq d \leq 2n-2$, is
spanned by vectors $v_k^d$, with $k=1,\ldots , 2n-d$.
\end{proof}

\noindent {\bf Remark 3}. For even $d$ let $C^d$, $2\leq d\leq n-1$, be the matrix with 
columns $v_k^d$ for $k=1,\ldots ,d$. Then dim$\, C^d = (d+1)\times d$, and dim$\, C^n = n\times (n-1)$ for even $d=n$. From case c) it follows for even $d$ with 
$n+1\leq d\leq 2n-2$ that $C^d = \mathbf{flip}\, C^{2n-d}$.
\medskip

\begin{proposition}\label{propOdd}
Let $d$ be odd, $3\leq d\leq 2n-3$, and $w^d$ defined by (\ref{wddef}).
	
	a) For $3\leq d\leq n-1$ and $2\le k\le d-1$ we define
	\begin{equation*}
		\begin{array}{ll}
		v_1^d= \begin{pmatrix} d-1 \\ d+1 \\ 0^{d-2} \end{pmatrix}\quad\text{and}\quad v_k^d=\begin{pmatrix} 0^{k-1} \\ -d+2k-1 \\ d-2k+3 \\ 0^{d-k-1} \end{pmatrix}\, ,\quad  2\leq k \leq \frac{d-1}{2}\, , \\ [2em]

v_{(d+1)/2}^d=\begin{pmatrix} 0^{(d-1)/2} \\ 1 \\ 1 \\ 0^{(d-3)/2} \end{pmatrix} \quad\text{and}\quad	
v_k^d=\begin{pmatrix} 0^{k-1} \\ -d+2k+1 \\ d-2k+1 \\ 0^{d-k-1} \end{pmatrix}\, , \quad\frac{d+3}{2}\leq k \leq d-1\, .
\end{array} 
\end{equation*}
The null space of $w^d$ is
\begin{equation*}
   N(w^d)=\mathrm{span} \{ v_1^d,v_2^d,\ldots,v_{d-1}^d \}\quad\text{and}\quad \mathrm{dim} \, N(w^d)=d-1\, .
\end{equation*}

	b) For $n$ odd and $2\le k\le n-1$ we define
	\begin{equation*}
		\begin{array}{ll}
v_{(n+1)/2}^n= \begin{pmatrix} 0^{(n-3)/2} \\ 1 \\ 1 \\ 0^{(n-3)/2} \end{pmatrix}\quad\text{and}\quad v_k^n=\begin{pmatrix} 0^{k-2} \\ -n+2k-1 \\ n-2k+3 \\ 0^{n-k-1}\, , \end{pmatrix}\quad 2\leq k \leq \frac{n-1}{2}\, , \\ [2.5em]

v_k^n=\begin{pmatrix} 0^{k-2} \\ -n+2k+1 \\ n-2k+1 \\ 0^{n-k-1} \end{pmatrix}\, ,\quad\frac{n+3}{2}\leq k \leq n-1\, .
		\end{array}
	\end{equation*}
	 Then null space of $w^n$ is
	\begin{equation*}
	N(w^n)=\mathrm{span} \{ v_2^n,\ldots,v_{n-1}^n \}\quad\text{and}\quad \mathrm{dim}\, N(w^n)=n-2\, .
	\end{equation*}

c) For $n+1\leq d\leq 2n-3$, $1\le k\le 2n-d-1$, define
$v_k^d = \mathbf{flip} \, v_{2n-d-k+1}^{2n-d}$.	
	The null space of $w^d$ is 
	\begin{equation*}
	N(w^d)=\mathrm{span} \{ v_1^d,v_2^d,\ldots,v_{2n-d-1}^d \}\quad\text{and}\quad \mathrm{dim} \, N(w^d)=2n-d-1\, .
	\end{equation*}
\end{proposition}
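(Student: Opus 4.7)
The plan is to mirror the proof of Proposition \ref{propEven}, modifying it only to account for the fact that when $d$ is odd the entry corresponding to $i=(d+1)/2$ (which would otherwise be zero) is absent from $w^d$. This shortens $w^d$ by one slot and necessitates a special ``bridge'' vector $v_{(d+1)/2}^d$ that pairs the two entries of $w^d$ flanking the deleted position.

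First I would handle part a). For odd $d$ with $3\le d\le n-1$, the vector $w^d$ consists of the leading entry $d+1$ together with the $d-1$ values $2i-d-1$ for $i\in\{1,\ldots,d\}\setminus\{(d+1)/2\}$, so $w^d\in\mathbb{R}^d$. Its range has dimension $1$, hence $\dim N(w^d)=d-1$, and it suffices to exhibit $d-1$ linearly independent vectors $v_k^d$ annihilating $w^d$. After fixing the position-to-index correspondence --- positions $2,\ldots,(d+1)/2$ in $w^d$ correspond to $i=1,\ldots,(d-1)/2$, and positions $(d+3)/2,\ldots,d$ to $i=(d+3)/2,\ldots,d$ --- the check $w^d v_k^d=0$ reduces in every case to a pairwise cancellation between two adjacent entries of $w^d$: for $v_1^d$ the relevant pair at positions $1,2$ is $(d+1,1-d)$; for $v_k^d$ with $2\le k\le (d-1)/2$ it is $(2k-d-3,2k-d-1)$; for the bridge vector $v_{(d+1)/2}^d$ the entries flanking the gap are $(-2,+2)$, so $(1,1)$ lies in its kernel; and for $(d+3)/2\le k\le d-1$ the pair is $(2k-d-1,2k-d+1)$. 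Linear independence is immediate from the bidiagonal support: the first nonzero position of $v_k^d$ equals $k$, yielding a staircase pattern.

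Parts b) and c) require no new ideas. For b) one repeats the calculation of a) on $w^n$, which is now missing the leading constant, so $w^n\in\mathbb{R}^{n-1}$, $\dim N(w^n)=n-2$, and the basis simply drops the analog of $v_1^d$. Part c) follows at once from the symmetry $w^d=\mathbf{flip}\,w^{2n-d}$, valid for $d\ne n$, combined with the invariance of the scalar product under joint reversal: $w^d v_k^d=(\mathbf{flip}\,w^{2n-d})(\mathbf{flip}\,v_{2n-d-k+1}^{2n-d})=w^{2n-d}v_{2n-d-k+1}^{2n-d}=0$ by part a), and flip preserves linear independence. The main (minor) obstacle is the bookkeeping that verifies the entries of $w^d$ adjacent to the deleted position are precisely $\pm 2$, which fixes the form of the bridge vector; once that is in place, everything else is routine.
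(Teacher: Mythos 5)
Your proof is correct and follows essentially the same route as the paper's: pairwise cancellation between the two adjacent entries of $w^d$ supporting each $v_k^d$, the special $(-2,+2)$ check for the bridge vector $v_{(d+1)/2}^d$, linear independence from the staircase of first nonzero positions, dimension count from $w^d\in\mathbb{R}^d$ (resp.\ $\mathbb{R}^{n-1}$), and the $\mathbf{flip}$ symmetry for part c). Your bookkeeping is if anything slightly more careful than the paper's, which misstates the positions of the entries $-2$ and $2$ by one (they sit at positions $(d+1)/2$ and $(d+3)/2$, as you say) without affecting the conclusion.
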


\begin{proof}
	a) This case is proved similarly as case a) in Proposition \ref{propEven}.  Again it is clear from the construction that $\{v_1^d,\ldots,v_{d-1}^d\}$ is a linearly independent set of vectors. Now $w^d\in \mathbb{R}^{d}$, so compared to Proposition \ref{propEven}, where $w^d\in \mathbb{R}^{d+1}$, the dimension is reduced by one in all cases.
	The first vector $v_1^d$ and the case when $2\leq k \leq (d-1)/2$ is identical to the even case,  apart from the decrease in dimension by 1, so these vectors belong to the null space.
	The middle case when $k=(d+1)/2$ is slightly different. The element -2 in $w^d$ is at position $(d-1)/2$ and the element 2 is at position $(d+1)/2$, hence it follows that $w^d v_{(d+1)/2}^d = -2\cdot 1 + 2\cdot 1=0$ and this vector belongs to the null space. The last case when $(d+3)/2\leq k \leq d-1$ results in 
\begin{equation*}
	w^d v_k^d = 0+(-d+2k-1)(-d+2k+1)+(-d+2k+1)(d-2k+1)+0=0\, ,
\end{equation*}
so $v_k^d$ belongs to the null space.  The case b), where $d=n$, is proved similarly, only with the first vector missing. The case c) is proved in the same manner as case c) in
	the previous proposition.
\end{proof} 
\medskip

\noindent {\bf Remark 4}. For odd $d$ let $C^d$, $3\leq d\leq n-1$, be the matrix with 
columns $v_k^d$ for $k=1,\ldots ,d-1$. Then dim$\, C^d = d\times (d-1)$ and for 
$n$ odd dim$\, C^n = (n-1)\times (n-2)$. From case c) it follows for odd $d$ with 
$n+1\leq d\leq 2n-2$ that $C^d = \mathbf{flip}\, C^{2n-d}$.
\medskip

A complete sparse description of the affine solution space of the relaxed system $Ax = b$ is then given by
\begin{equation}\label{FullSystem}
\begin{pmatrix} x^0 \\ x^1 \\ x^2 \\ \vdots \\ x^{2n} \end{pmatrix} =
\begin{pmatrix} \alpha^0 \\ \alpha^1 \\ \alpha^2 \\ \vdots \\ \alpha^{2n} \end{pmatrix} +
\begin{pmatrix} 0 	& \ldots 	&  		& 0 		\\
				0	& \ldots 	&		& 0		 	\\
				C^2 & 			&		&			\\
				    & C^3		&		&			\\
				    &   		& \ddots &			\\
				    &  			&		& C^{2n-2}		\\
				  0 	& \ldots 	&  		& 0 		\\
				0	& \ldots 	&		& 0		 	\\
\end{pmatrix} 
\begin{pmatrix} t^2 \\ t^3 \\ \vdots \\ t^{2n-2} \end{pmatrix} + 
\begin{pmatrix} \beta^0 \\ \beta^1 \\ \vdots \\ \beta^{2n} \end{pmatrix} t_\beta\, .
\end{equation}
 A compact representation of this system is $x=\alpha + Ct + \beta t_\beta$, where all nonspecified entries in $C$ are zero. The system can also be expressed degree-wise as
 $x^d = \alpha^d + C^d t^d + \beta^d t_\beta$ for $2\leq d\leq 2n-2$ and 
 $x^d = \alpha^d + \beta^d t_\beta$ for $d=0,1,2n-1,2n$. In the following proposition some properties of this system are summarized.

\begin{proposition}
 The system (\ref{FullSystem}) has the following structural properties:

a) $\alpha ^d = \mathbf{flip}\, \overline{\alpha ^{2n-d}}$, $0\leq d \leq 2n$.

b) $\beta ^d = \mathbf{flip}\, \overline{\beta ^{2n-d}}$, $0\leq d \leq 2n,\, d\neq n$.

c) $x^d = \mathbf{flip}\, \overline{x^{2n-d}}$, $0\leq d \leq 2n,\, d\neq n$.

d) $C^d = \mathbf{flip}\, C^{2n-d}$, $2\leq d \leq 2n-2$.

e) $t^d = \mathbf{flip}\, \overline{t^{2n-d}}$, $2\leq d \leq 2n-2,\, d\neq n$.

f) The variable $t_{\beta}$ is real.
\end{proposition}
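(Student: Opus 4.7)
The plan is to split the claims into (i) the structural identities for $\alpha$, $\beta$, $C$ individually, which follow from constructions already carried out, and (ii) the compatibility conditions between $x$, $t$, $t_\beta$ that come from imposing the self-reversive structure on $x$.

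For (d) I would simply cite Remarks 3 and 4: part~c) of Propositions \ref{propEven} and \ref{propOdd} defines the columns of $C^d$ for $d>n$ as flipped columns of $C^{2n-d}$ taken in reverse order, which is exactly $C^d=\mathbf{flip}\, C^{2n-d}$ once the matrix-flip convention is unpacked. For (a), the construction in Proposition \ref{Propalphabeta} makes $\alpha$ the vector whose linear entries are read off from the self-inversive polynomial $W_1$ of \eqref{PolW1} and whose quadratic entries vanish. The unique nonzero entry of $\alpha^d$ sits at the first position when $d<n$, equalling $a_{d+1}=c_d/(d+1)$, and at the last position when $d>n$, equalling $\overline{a}_{2n-d+1}=c_d/(2n-d+1)$; combined with $c_d=\overline{c}_{2n-d}$ this gives $\alpha^d=\mathbf{flip}\,\overline{\alpha^{2n-d}}$, while the case $d=n$ is vacuous since $\alpha^n=0$. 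Statement (b) is identical: the linear entries of $\beta$ coincide with those of $\alpha$ by part~2 of the proof of Proposition \ref{Propalphabeta}, and only the degree-$n$ block, constructed separately in \eqref{beta_n}, need be excluded.

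For the coupled claims (c), (e), (f) I would substitute $x=\alpha+Ct+\beta t_\beta$ into $x^d=\mathbf{flip}\,\overline{x^{2n-d}}$, cancel the $\alpha$-terms via (a), push $\mathbf{flip}\,\overline{\cdot}$ through the $\beta$-terms via (b), and rewrite the $C$-terms using (d) together with the elementary identity $(\mathbf{flip}\, M)v=\mathbf{flip}(M\,\mathbf{flip}\, v)$, which holds since every $C^d$ is real. The requirement then collapses to
\begin{equation*}
C^{2n-d}\bigl(\mathbf{flip}\, t^d-\overline{t^{2n-d}}\bigr)=\mathbf{flip}(\beta^d)\,(\overline{t_\beta}-t_\beta),\qquad d\ne n.
\end{equation*}
Evaluating at $d=0$ kills the $C$-block, and since $\beta^0=a_1$ is nonzero by Remark 1, one is forced to $\overline{t_\beta}=t_\beta$, giving (f). With $t_\beta$ real the right-hand side vanishes at every intermediate $d$, and the linear independence of the columns of $C^{2n-d}$, proved in Propositions \ref{propEven} and \ref{propOdd}, yields $\mathbf{flip}\, t^d=\overline{t^{2n-d}}$, which is (e). The reverse implication, that (e) with (f) implies (c), follows by reading the same computation backwards.

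The chief bookkeeping hazard is the double role of $\mathbf{flip}$ (order reversal only on a vector, but order reversal in both dimensions together with conjugation on a matrix) and the systematic exclusion of the degree-$n$ block, which must be handled separately because $\beta^n$ is deliberately constructed in \eqref{beta_n} to be non-self-reversive. Once the block-length identities $|I^d|=|I^{2n-d}|$ and the matching column counts of $C^d$ and $C^{2n-d}$, both recorded earlier in the paper, are in hand, the algebraic manipulations become routine.
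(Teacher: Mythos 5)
Your treatment of (a), (b) and (d), and your derivation of (e) and (f), are in substance the paper's own proof: (d) is read off from part c) of Propositions \ref{propEven} and \ref{propOdd}, (a) and (b) from the construction in Proposition \ref{Propalphabeta} (your explicit tracking of the single nonzero entry of $\alpha^d$ and the use of $c_d=\overline{c}_{2n-d}$ is a welcome amplification of what the paper leaves implicit), and (f) and (e) follow by the same substitution, the same appeal to $\beta_1\neq 0$ from Remark 1, and the same full-rank argument for $C^d$.

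There is, however, one genuine logical gap: part (c) is never actually established. Your argument proves only the equivalence of (c) with the conjunction of (e) and (f), given (a), (b), (d): you derive (e) and (f) \emph{from} the identity $x^d=\mathbf{flip}\,\overline{x^{2n-d}}$ (``substitute \dots into $x^d=\mathbf{flip}\,\overline{x^{2n-d}}$'', ``the requirement then collapses to \dots''), and then assert that (c) follows from (e) and (f) ``by reading the same computation backwards''. Neither side of this equivalence is anchored to an independently known fact, so the chain is circular. Note that (c) cannot come from the affine parametrization alone: a generic point of (\ref{FullSystem}) with complex $t_\beta$ violates it. The paper breaks the circle by proving (c) first, directly from the conjugate-quadratic structure of the variable vector: by the definition of the blocks $x^d(a)$ in (\ref{wddef}) --- equivalently, by $x_i=a_{\mathcal{I}_i}\overline{a}_{\overline{\mathcal{I}}_i}$ together with Lemma \ref{Revlemma}, $I^d=\mathbf{flip}\,\overline{I}^{2n-d}$ --- the entries of $x^{2n-d}(a)$ are exactly the conjugates of the entries of $x^d(a)$ in reverse order, so $x^d(a)=\mathbf{flip}\,\overline{x^{2n-d}(a)}$ holds for every admissible $a$, prior to and independently of the representation $x=\alpha+Ct+\beta t_\beta$. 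Insert that one observation as the proof of (c) and the rest of your computation delivers (f) and then (e) exactly as in the paper.
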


\begin{proof}
Cases a), b) and d) are implied by the proofs of Propositions \ref{Propalphabeta}-\ref{propOdd}. Case c) follows from the quadratic structure of the variable vector $x(a)=(x^0(a), \ldots , x^{2n}(a))$ defined by the vectors $x^d(a)$ introduced in (\ref{wddef}). 

Case f). The first element of $x(a)$ is 
$x_1 = a_1 = \alpha_1 + \beta_1 t_{\beta}$ and the last element is
$x_{n^2+n} = \overline{a}_1 = \alpha_{n^2+n} + \beta_{n^2+n} t_{\beta}$.
Clearly $x_1 = \overline{x}_{n^2+n}$ so $\alpha_1 + \beta_1 t_{\beta} =
\overline{\alpha_{n^2+n} + \beta_{n^2+n} t_{\beta}} = \alpha_1 
 + \beta_1 \overline{t_{\beta}}$
and it follows that $\overline{t_{\beta}}= t_{\beta}$ so $t_{\beta}$ is real. 

Case e). We study the equality $x^d - \mathbf{flip}\, \overline{x^{2n-d}}=0$, from case c), for a $d$ such that  $2\leq d \leq 2n-2$.
\begin{equation*} 
\begin{aligned}
x^d - \mathbf{flip}\, \overline{x^{2n-d}} & = \alpha^d + C^d t^d + \beta^d t_{\beta} - 
\mathbf{flip}\, ( \overline{\alpha^{2n-d}} + C^{2n-d} \overline{t^{2n-d}} + \overline{\beta^{2n-d}} t_{\beta} ) \\
& = C^d t^d - \mathbf{flip}\, (C^{2n-d} \overline{t^{2n-d}}) \\
& = C^d t^d - C^d \mathbf{flip}\, \overline{t^{2n-d}}  \\
& = C^d (t^d - \mathbf{flip}\, \overline{t^{2n-d}}) = 0\, ,
\end{aligned}
\end{equation*}
so $t^d = \mathbf{flip}\, \overline{t^{2n-d}}$, since matrix $C^d$ is full rank.
\end{proof}

\begin{proposition}\label{nullspace}
	The null space of matrix $A$, with $\mathrm{dim} \, N(A) = n^2-n$, is completely defined by the structure of the Wronskian $W(z)$, i.e. the matrix $C$, together with the data dependent vector $\beta$ from Proposition \ref{Propalphabeta}.
\end{proposition}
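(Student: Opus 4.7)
The plan is to establish two things: that $\dim N(A) = n^2 - n$, and that the $n^2 - n$ explicit vectors in $N(A)$ given by the columns of $C$ together with $\beta$ are linearly independent.

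First I would compute $\dim N(A)$ by analyzing the column space of $A$. Each column of $A$, say the one indexed by a variable $x_j$ belonging to degree block $d$, equals the scalar $w_j$ times the Vandermonde-type vector $(z_1^d, z_2^d, \ldots, z_{2n}^d)^T$. Since every degree $d \in \{0,1,\ldots,2n\}$ is represented by at least one column (the weight vectors $w^d$ are nonzero by (\ref{wddef})), the column space of $A$ lies in the span of the $2n+1$ vectors $\{(z_1^d,\ldots,z_{2n}^d)^T : 0\le d\le 2n\}$. These form the columns of a $2n\times(2n+1)$ Vandermonde matrix with distinct nodes, hence span the whole of $\mathbb{C}^{2n}$. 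Consequently $\mathrm{rank}(A) = 2n$ and $\dim N(A) = n^2 + n - 2n = n^2 - n$.

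Next I would verify that each constructed vector lies in $N(A)$. For a column of $C$ supported on degree block $d$ with restricted entries equal to some $v_k^d \in N(w^d)$, the product $Av$ is, in row $k$, the scalar $z_k^d\,(w^d\cdot v_k^d) = 0$ by Propositions \ref{propEven} and \ref{propOdd}; so every column of $C$ lies in $N(A)$. That $\beta\in N(A)$ is exactly the content of part~2 of Proposition \ref{Propalphabeta}.

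The remaining task is a dimension count combined with linear independence. The total number of columns of $C$ equals, after summing the block sizes from Propositions \ref{propEven}--\ref{propOdd} and applying the $\mathbf{flip}$-symmetry for $n+1\le d\le 2n-2$, exactly $n^2 - n - 1$; a short parity case analysis on $n$ confirms this (for instance $n=3,4,5$ give $5,11,19$, matching $n^2-n-1$). Columns of $C$ belonging to different degree blocks are independent because they have disjoint supports in $x$, and within a fixed block independence is guaranteed by the construction in Propositions \ref{propEven}--\ref{propOdd}. Finally, $\beta$ is independent of all columns of $C$ by Remark~1: the entry $\beta_1$ corresponds to degree $0$ (the coefficient $a_1$), a position where every column of $C$ vanishes since $C$ has no degree-$0$ or degree-$1$ block, and $\beta_1 \ne 0$.

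The main obstacle I anticipate is the bookkeeping in the dimension count: keeping track of the odd/even parities of $d$ and $n$, the special treatment of the block $d=n$, and the $\mathbf{flip}$-symmetric contributions from $n+1\le d\le 2n-2$. All other steps are either a direct appeal to previous propositions or follow from the block-diagonal support structure of $C$, so once the counting is done the result follows immediately.
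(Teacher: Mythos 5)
Your proposal is correct and follows essentially the same route as the paper: columns of $C$ and the vector $\beta$ are shown to lie in $N(A)$, the block dimensions of $C$ sum to $n^2-n-1$, and linear independence follows from the disjoint degree supports of the blocks together with $\beta_1\neq 0$ at a position where every column of $C$ vanishes. Two minor points: your rank argument is stated in the wrong logical order (containment of the column space of $A$ in the span of the Vandermonde vectors only gives $\mathrm{rank}\,A\le 2n$; the inclusion you actually need is that each vector $(z_1^d,\dots,z_{2n}^d)^T$ is a nonzero scalar multiple of some column of $A$, which is what your parenthetical about nonzero weights supplies), and the general count $\sum_{d=2}^{2n-2}\dim N(w^d)=n^2-n-1$ should be carried out for general $n$ rather than verified on $n=3,4,5$ --- this is precisely the parity computation the paper performs in its proof.
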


\begin{proof}
The vector $\beta$ belongs to the null space of $A$ by construction. Each column of $C$ belongs to the null space of $A$ since each column of $C^d$ contains
a null space vector $v_k^d$ that corresponds to some degree $d$, $2\leq d\leq 2n-2$. All columns of $C$ are also clearly linearly independent by Propositions \ref{propEven} and \ref{propOdd}. 
Recall that $\mathrm{dim} \, N(w^d)=\mathrm{dim} \, N(w^{d+1})$ for even $d$. Consider the case of even $n$:
\begin{align*}
\sum_{d=2}^{2n-2} \mathrm{dim}\, N(w^d) & = \sum_{d=2}^{n-1} \mathrm{dim} \, N(w^d) + \mathrm{dim}\, N(w^n) +  \sum_{d=n+1}^{2n-2} \mathrm{dim} \, N(w^d)\\
								 & = \sum_{d=2,d\, \mathrm{even}}^{n-1} 2\mathrm{dim} \, N(w^{d}) + n-1 + \sum_{d=n+2,d\, \mathrm{even}}^{2n-2} 2\mathrm{dim} \, N(w^{d}) \\
								 & = \sum_{d=1}^{(n-2)/2} 2\mathrm{dim} \, N(w^{2d}) + n-1 +  \sum_{d=1}^{(n-2)/2} 2\mathrm{dim} \, N(w^{n+2d})\\
								 & = 2\sum_{d=1}^{(n-2)/2} (2d+2n-(n+2d)) + n-1 \\
								 & = n-1+2\sum_{d=1}^{(n-2)/2} n =n(n-2) + n-1=n^2-n-1\, .
\end{align*}
For odd $n$ a similar computation gives $\sum_{d=2}^{2n-2} \mathrm{dim}\, N(w^d) = n^2-n-1$.
Thus $C$ has $n^2-n-1$ linearly independent columns and $\mathrm{rank}\, C=n^2-n-1$.
Since $\beta_1\neq 0$, due to Remark 1, and the first element in each column of $C$ is zero it is clear that the data dependent vector $\beta$ together with all columns of $C$ form a linearly independent set of vectors. Hence, including  $\beta$, a complete description of the null space with $n^2-n$ null space basis vectors is obtained. 
\end{proof}
\medskip

At this point {\bf we define} $m:=\lfloor n^2/2 \rfloor$ and
$p:=\lfloor n^2/2 \rfloor + n$ and the {\bf reduced} Wronskian $\hat{W}(z)$
 \begin{equation*}
 \hat{W}(z) := \sum_{d=0}^{n} W^d(z)\, + (n+1)z^n .
 \end{equation*}
A straight forward calculation gives that $p$ equals the total number 
of terms in $\hat{W}(z)$ that contain variables $a$.
Note that the solution to $Ax(a)=b$ is completely
determined by the values $a_1$ to $a_n$ and the self-reversive
structure of $x(a)$. It is enough to consider the part 
$\hat{x}=\hat{x}(a) = (x^0,x^1,\ldots, x^n)\in \mathbb{C}^p$ of the
vector $x(a)$ that corresponds to degrees $0$ to $n$, since the
rest can be obtained by conjugation and reversion. Thus we define the
\textbf{reduced} system
\begin{equation*}
 \hat{A}\hat{x}(a)=\hat{b}\, ,
\end{equation*}
where $\hat{A}\in \mathbb{C}^{n\times p}$, $\hat{x}(a)\in \mathbb{C}^p$
and $\hat{b} := -(n+1)[z_1^n,...,z_n^n]^T$. The matrix $\hat{A}$ is
the submatrix consisting of rows 1 to $n$ and columns 1 to $p$ of
$A$. For distinct critical points $z_k$ we have rank$\, \hat{A}=n$ and
dim$\, N(\hat{A})=p-n=m$. Any reduced
solution to $\hat{A}\hat{x}(a)=\hat{b}$ can be expanded to a full
solution of $Ax(a)=b$ by concateneting conjugated elements.
Conversely, any full solution $x(a)$ can be reduced to a solution
$\hat{x}(a)$ by deletion of end elements. 
Omitting the quadratic structure, we study the
solution space defined by the \textbf{reduced relaxed linear}
system
\begin{equation*} 
\hat{A}\hat{x}=\hat{b}
\end{equation*}
 defined by the reduced Wronskian 
$\hat{W}(z)$.
 We adopt an identical notation for the reduced particular solution $\hat{\alpha}$ to $\hat{A}\hat{x}=\hat{b}$ and for the reduced data dependent vector $\hat{\beta}$, which is {\bf not} a solution to $\hat{A}\hat{x}=0$.
Let again $C^d$ be the matrix with columns $v_k^d$ constructed according to Propositions \ref{propEven} and \ref{propOdd} and let subvectors in the vectors $\hat{x}$, $\hat{\alpha}$ and $\hat{\beta}$ corresponding to degree $d$ be denoted by $x^d$, $\alpha^d$ and $\beta^d$, $0\leq d\le n$. For weights $t^d$ and $t_\beta$ we define the reduced system
\begin{equation}\label{RedSystem}
\begin{pmatrix} x^0 \\ x^1 \\ x^2 \\ \vdots \\ x^{n} \end{pmatrix} =
\begin{pmatrix} \alpha^0 \\ \alpha^1 \\ \alpha^2 \\ \vdots \\ \alpha^{n} \end{pmatrix} +
\begin{pmatrix} 0 	& \ldots 	&  		& 0 		\\
				0	& \ldots 	&		& 0		 	\\
				C^2 & 			&		&			\\
				    & C^3		&		&			\\
				    &   		& \ddots &			\\
				    &  			&		& C^n		\\
\end{pmatrix} 
\begin{pmatrix} t^2 \\ t^3 \\ \vdots \\ t^n \end{pmatrix} + 
\begin{pmatrix} \beta^0 \\ \beta^1 \\ \vdots \\ \beta^{n} \end{pmatrix} t_\beta
\end{equation}

A compact representation of this system is $\hat{x}=\hat{\alpha} + 
\hat{C}\hat{t} + \hat{\beta} t_\beta$ or 
$x^d = \alpha^d + C^d t^d + \beta^d t_\beta$ for $2\leq d\leq n$ and 
$x^d = \alpha^d + \beta^d t_\beta$ for $d=0,1$.
Block $C^d$ is positioned in the block matrix $\hat{C}$ with its northwest corner at row $\lceil d^2/2\rceil +1$ and column $\lfloor (d-1)^2/2\rfloor +1$. 

\begin{proposition}
a) If the vector $x\in\mathbb{C}^{n^2+n}$ is contained in the affine space (\ref{FullSystem}) then the reduced vector $\hat x\in\mathbb{C}^{p}$ is in the affine space (\ref{RedSystem}).

b) Any vector $\hat x$ in the space (\ref{RedSystem}) can be extended to a vector $x$ contained in the space (\ref{FullSystem}).     
\end{proposition}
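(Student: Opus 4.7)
The plan is to exploit the fact that, by construction, the reduced objects $\hat\alpha$, $\hat\beta$ and the block matrix $\hat C$ are exactly the truncations to the first $p$ coordinates of $\alpha$, $\beta$ and of the portion of the full $C$ formed by the blocks $C^2,\ldots,C^n$. The remaining blocks $C^{n+1},\ldots,C^{2n-2}$ in the full $C$ are vertically offset so that their rows lie strictly below row $p$: the northwest corner of $C^d$ sits at row $\lceil d^2/2\rceil+1$, and for $d\ge n+1$ one has $\lceil d^2/2\rceil+1 > \lfloor n^2/2\rfloor+n = p$. Consequently these higher-degree blocks cannot influence the first $p$ coordinates of any vector in (\ref{FullSystem}).

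For part a), given $x = \alpha + Ct + \beta\, t_\beta$ with $t = (t^2,\ldots,t^{2n-2})$, I would restrict to the first $p$ coordinates. By the positioning observation, the summands $C^d t^d$ with $d\ge n+1$ contribute zero to these rows, while the summands with $2\le d\le n$ assemble into $\hat C\hat t$ where $\hat t := (t^2,\ldots,t^n)$. Combined with the truncations $\hat\alpha$ and $\hat\beta\, t_\beta$, this yields $\hat x = \hat\alpha + \hat C\hat t + \hat\beta\, t_\beta$, placing $\hat x$ in the affine space (\ref{RedSystem}).

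For part b), given a representation $\hat x = \hat\alpha + \hat C\hat t + \hat\beta\, t_\beta$ with $\hat t = (t^2,\ldots,t^n)$, I would extend $\hat t$ to a full weight vector $t = (t^2,\ldots,t^{2n-2})$ by choosing the missing entries $t^{n+1},\ldots,t^{2n-2}$ arbitrarily (for instance all zero, or the self-reversive completion $t^{2n-d} := \mathbf{flip}\,\overline{t^d}$, which additionally produces a self-reversive $x$). Setting $x := \alpha + Ct + \beta\, t_\beta$ then places $x$ in (\ref{FullSystem}), and part a) immediately confirms that the first $p$ coordinates of $x$ reproduce $\hat x$, so $x$ is a valid extension.

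The whole argument is structural rather than computational, so there is no genuine obstacle; the only point requiring care is bookkeeping the row indices of the diagonal blocks, and this is already settled by the formula $\lceil d^2/2\rceil+1$ for the northwest corner of $C^d$ together with the definitions $m=\lfloor n^2/2\rfloor$ and $p=m+n$, which make the cutoff at row $p$ clean.
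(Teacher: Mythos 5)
Your argument is correct, and for part a) it is essentially the paper's proof (which is dismissed as ``clear from the construction'') made explicit: the blocks $C^d$ with $d\ge n+1$ act only on the subvectors $x^{n+1},\ldots,x^{2n-2}$, which lie entirely beyond position $p=\lfloor n^2/2\rfloor+n$, so truncation of $x=\alpha+Ct+\beta t_\beta$ to its first $p$ entries lands exactly on $\hat\alpha+\hat C\hat t+\hat\beta t_\beta$. One bookkeeping caveat: the formula $\lceil d^2/2\rceil+1$ for the northwest corner of $C^d$ is stated in the paper only for the blocks of $\hat C$, i.e.\ for $2\le d\le n$, and it is \emph{not} the correct row index in the full matrix $C$ when $d\ge n+1$ (for $n=3$, $d=4$ it predicts row $9$ while the block actually starts at row $8$); the clean justification is simply that the rows of $C^d$ are the positions of the subvector $x^d$ and $(x^0,\ldots,x^n)$ has exactly $p$ entries, so every block with $d\ge n+1$ sits strictly below row $p$. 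For part b) you diverge mildly from the paper: the paper extends the reduced solution canonically by the symmetry relations $x^d=\mathbf{flip}\,\overline{x^{2n-d}}$, $t^d=\mathbf{flip}\,\overline{t^{2n-d}}$ from Proposition \ref{nullspace}, whereas you observe that \emph{any} completion of the missing weights $t^{n+1},\ldots,t^{2n-2}$ yields a valid element of (\ref{FullSystem}) restricting to $\hat x$. Your version proves the stated proposition with less machinery; the paper's specific self-reversive completion is the one that matters downstream, since it is the unique extension compatible with the conjugate-quadratic structure $x=x(a)$, and you rightly flag it as the distinguished choice.
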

\begin{proof}
a) This case is clear from the construction of the affine spaces.

b) Let $(\hat{x}, \hat{t}, t_\beta)$ be a solution to the reduced system, that is
 $x^d = \alpha^d + C^d t^d + \beta^d t_\beta$ for $2\leq d\leq n$ and 
$x^d = \alpha^d + \beta^d t_\beta$ for $d=0,1$. Using the results from Proposition
\ref{nullspace} it is clear that this solution can be extended 
to degrees $n+1\leq d\leq 2n$ according to
$\alpha ^d = \mathbf{flip}\, \overline{\alpha ^{2n-d}}$, 
$\beta ^d = \mathbf{flip}\, \overline{\beta ^{2n-d}}$ and
$x^d = \mathbf{flip}\, \overline{x^{2n-d}}$ and for $n+1\leq d\leq 2n-2$
according to $C^d = \mathbf{flip}\, C^{2n-d}$ and
$t^d = \mathbf{flip}\, \overline{t^{2n-d}}$. If we append
degrees $n+1\leq d\leq 2n$ to the reduced solution 
$(\hat{x}, \hat{t}, t_\beta)$ the obtained triplet $(x,t,t_\beta)$ clearly
represents a solution to the full system (\ref{FullSystem}).
\end{proof}

We summarize our investigation in the main theorem that defines 
the sparse quadratic model which has as solutions a finite Blaschke product and Blaschke forms of degree $n+1$, all of the form (\ref{Ansatz}).
The first set of constraints represents the affine description of the solution set
and the second set imposes the conjugate quadratic structure embedded in $\hat{x}(a)$.

\begin{theorem}
A sparse representation of the quadratic system $\hat{A}\hat{x}(a)=b$ is given by
\begin{equation}\label{sparsemodel}
\begin{cases}
\hat{x}=\hat{\alpha} + \hat{C} \hat{t} + \hat{\beta} t_\beta \\
x_i = x_{\mathcal{J}_{\mathcal{I}_i}} \, 
\overline{x}_{\mathcal{J}_{\overline{\mathcal{I}}_i}},
\quad i\in \{0,1,2,...,p\}\backslash \mathcal{J}\, ,
\end{cases}
\end{equation}
where $t\in \mathbb{C}^{m-1}$, $t_\beta \in \mathbb{R}$ and $\hat{x}\in  \mathbb{C}^p$. A solution to this 
system represents a Blaschke product (or form) and the coefficients $a_i$ can be retrieved
from the solution according to $a_i = x_{\mathcal{J}_i}$ for $i=1,\ldots , n$.
\end{theorem}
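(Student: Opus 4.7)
The plan is to verify that (\ref{sparsemodel}) is exactly the reduced quadratic system $\hat{A}\hat{x}(a)=\hat{b}$ rewritten in two pieces: (i) an affine parametrization of the solution set of the reduced relaxed linear system $\hat{A}\hat{x}=\hat{b}$, and (ii) algebraic constraints that enforce the conjugate quadratic structure $\hat{x}=\hat{x}(a)$.

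First I would show that the first line of (\ref{sparsemodel}) parametrizes the solution set of $\hat{A}\hat{x}=\hat{b}$. The particular solution $\hat{\alpha}$ and the null-space vector $\hat{\beta}$ arise by truncating the vectors of Proposition \ref{Propalphabeta} to degrees $0,\ldots,n$; truncation preserves the relevant identities because $\hat{A}$ is the leading $n\times p$ block of $A$. Each column $v_k^d$ of the block $C^d$ appearing in $\hat{C}$ satisfies $w^d v_k^d=0$ by Propositions \ref{propEven} and \ref{propOdd}, so when placed in the degree-$d$ slot of $\hat{x}$ it annihilates that slot's contribution to every row of $\hat{A}$; hence each column of $\hat{C}$ lies in $N(\hat{A})$. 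A dimension count parallel to Proposition \ref{nullspace}, but summed only over $2\le d\le n$, then shows that $\hat{\beta}$ together with the columns of $\hat{C}$ form a basis of $N(\hat{A})$ of size $m=p-n$, so the affine parametrization is complete.

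Next I would check that the algebraic constraints encode exactly $\hat{x}=\hat{x}(a)$. Setting $a_i:=x_{\mathcal{J}_i}$ for $i=1,\ldots,n$ (with $a_0:=x_0:=1$), Proposition \ref{indexJ} identifies the $a_i$ at their proper slots. For every $i\in\{0,\ldots,p\}\setminus\mathcal{J}$ the constraint
\begin{equation*}
x_i=x_{\mathcal{J}_{\mathcal{I}_i}}\,\overline{x}_{\mathcal{J}_{\overline{\mathcal{I}}_i}}=a_{\mathcal{I}_i}\,\overline{a}_{\overline{\mathcal{I}}_i}
\end{equation*}
reproduces, by the definitions (\ref{Id})-(\ref{Idn}) and (\ref{wddef}), precisely the $i$th entry of $\hat{x}(a)$. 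Combined with the identifications at the $\mathcal{J}$-indices, this gives $\hat{x}=\hat{x}(a)$.

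Combining both parts, a triple $(\hat{x},\hat{t},t_\beta)$ solves (\ref{sparsemodel}) exactly when there is $a\in\mathbb{C}^n$ with $\hat{x}=\hat{x}(a)$ satisfying $\hat{A}\hat{x}(a)=\hat{b}$. By self-reversive extension to degrees $n+1,\ldots,2n$ this is equivalent to $W(z_k)=0$ for $k=1,\ldots,2n$, so the Ansatz (\ref{Ansatz}) yields a rational function of form (\ref{bladef2}) whose critical points in $\mathbb{D}$ include $z_1,\ldots,z_n$; this is either a finite Blaschke product of degree $n+1$ or, when $p$ and $q$ share a common factor producing a pole in $\mathbb{D}$, a Blaschke form. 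The recovery $a_i=x_{\mathcal{J}_i}$ is immediate from Proposition \ref{indexJ}. The main obstacle is the combinatorial bookkeeping in the second step: confirming from (\ref{Id})-(\ref{Idn}) and Proposition \ref{indexJ} that the indices $\{0,\ldots,p\}\setminus\mathcal{J}$ biject with the genuinely quadratic entries of $\hat{x}(a)$, and that $\mathcal{J}_{\mathcal{I}_i}$, $\mathcal{J}_{\overline{\mathcal{I}}_i}$ correctly select the factors $a_{\mathcal{I}_i}$, $\overline{a}_{\overline{\mathcal{I}}_i}$; this is a direct, if tedious, verification already illustrated for $n=3$ in Example 1.
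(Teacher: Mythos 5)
Your second step --- that the constraints $x_i = x_{\mathcal{J}_{\mathcal{I}_i}}\overline{x}_{\mathcal{J}_{\overline{\mathcal{I}}_i}}$ for $i\notin\mathcal{J}$ reproduce exactly the quadratic structure $\hat{x}=\hat{x}(a)$ --- is correct and matches the paper's bookkeeping via (\ref{Id})--(\ref{Idn}) and Proposition \ref{indexJ}, and the columns of $\hat{C}$ do lie in $N(\hat{A})$. The genuine gap is in your first step: you claim that $\hat{\beta}$ together with the columns of $\hat{C}$ forms a basis of $N(\hat{A})$, so that the affine line of (\ref{sparsemodel}) is the general solution of $\hat{A}\hat{x}=\hat{b}$. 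This is false, and the paper explicitly flags it: $\hat{\beta}$ is \emph{not} a solution of $\hat{A}\hat{x}=0$. The identity $A\beta=0$ in Proposition \ref{Propalphabeta} rests on a cancellation between the degree-$n$ block (which contributes $(n+1)z_k^n$) and the linear terms of degrees $0,\dots,n-1$ \emph{and} $n+1,\dots,2n$, whose total is $W_1(z_k)=0$; truncating to the first $p$ columns discards the conjugate-linear terms $\overline{a}_{n-i+1}z_k^{n+i}$ and destroys the cancellation, so $\hat{A}\hat{\beta}\neq 0$ (and likewise $\hat{A}\hat{\alpha}\neq\hat{b}$ in general). Hence the affine set $\hat{\alpha}+\hat{C}\hat{t}+\hat{\beta}t_\beta$ is not the solution set of $\hat{A}\hat{x}=\hat{b}$; it is the coordinate projection onto degrees $0,\dots,n$ of the solution set of the \emph{full} relaxed system $Ax=b$, which is what the paper establishes through Proposition \ref{nullspace} and the proposition relating (\ref{RedSystem}) to (\ref{FullSystem}).

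This propagates into your final paragraph: you pass from $\hat{A}\hat{x}(a)=\hat{b}$ to $W(z_k)=0$, $k=1,\dots,2n$, ``by self-reversive extension,'' but $\hat{A}\hat{x}(a)=\hat{b}$ only asserts $\sum_{d=0}^{n}W^d(z_k)+(n+1)z_k^n=0$, which differs from $W(z_k)=0$ by the tail $\sum_{d=n+1}^{2n}W^d(z_k)$, and self-reversiveness of $x(a)$ does not make that tail vanish. The correct chain is the paper's: a solution of (\ref{sparsemodel}) lies in the truncated affine space (\ref{RedSystem}), hence extends via $x^d=\mathbf{flip}\,\overline{x^{2n-d}}$, $t^d=\mathbf{flip}\,\overline{t^{2n-d}}$ to a point of (\ref{FullSystem}) satisfying $Ax=b$; the quadratic constraints then force $x=x(a)$, so $Ax(a)=b$, i.e.\ the full Wronskian vanishes at all $2n$ points and (\ref{Ansatz}) is a Blaschke product or form. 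As written, your argument characterizes the solutions of a different system, whose solutions need not have the prescribed critical points.
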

\medskip

\noindent\textbf{Remark 5.} This sparse representation contains no high powers of the critical points $z_k$. The only data dependent parts are the reduced particular solution $\hat{\alpha}$ and the reduced data dependent (null space) vector $\hat{\beta}$. The number of linear constraints is $p$, the number of quadratic constraints is $m$ and the total number of variables $p+m$. If we substitute all occurences of variables $x$ in the quadratic constraints, using the sparse affine representation $\hat{x}=\hat{\alpha} + \hat{C} \hat{t} + \hat{\beta} t_\beta$, we arrive at a quadratic square system of size $m\times m$ in variables $t$ and $t_\beta$.
\medskip

\noindent {\bf Example 3.} Let $\hat{\alpha}\in \mathbb{C}^7$ and $\hat{\beta}\in
\mathbb{C}^7$ be known vectors computed according to the recipe in Proposition \ref{Propalphabeta}. The sparse quadratic system for
$n=3$ is given by
\begin{align*}
x_1 = \alpha_1 &       &     &       &+\beta_1 t_\beta \\
x_2 = \alpha_2 &       &     &       &+\beta_2 t_\beta \\
x_3 = \alpha_3 &+t_1   &     &       &+\beta_3 t_\beta \\
x_4 = \alpha_4 &+3t_1  &+t_2 &       &+\beta_4 t_\beta \\
x_5 = \alpha_5 &       &+t_2 &       &+\beta_5 t_\beta \\
x_6 = \alpha_6 &       &     &+t_3   &+\beta_6 t_\beta \\
x_7 = \alpha_7 &       &     &+t_3   &+\beta_7 t_\beta
\end{align*}

\begin{equation*}
x_4=x_1\overline{x}_2,\; x_5=x_2\overline{x}_3,\; x_6=|x_1|^2,\; x_7=|x_3|^2
\end{equation*}
\begin{equation*}
x\in \mathbb{C}^7,\; t\in \mathbb{C}^3,\; t_\beta \in \mathbb{R} .
\end{equation*}

Substitution of $x$ variables gives a $4\times 4$ square quadratic system in variables $t$
and $t_\beta$,
\begin{align*}
\alpha_4 +3t_1 +t_2 +\beta_4 t_\beta &= (\alpha_1 +\beta_1 t_\beta)(\overline{\alpha_2 +\beta_2 t_\beta}) \\
\alpha_5 +3t_1 +t_2 +\beta_5 t_\beta &= (\alpha_2 +\beta_2 t_\beta)(\overline{\alpha_3 + t_1 +\beta_3 t_\beta}) \\
\alpha_6 +t_3 +\beta_6 t_\beta &= (\alpha_1 +\beta_1 t_\beta) (\overline{\alpha_1 +\beta_1 t_\beta}) \\
\alpha_7 + t_3 + \beta_7 t_\beta &= (\alpha_3 +t_1 +\beta_3 t_\beta) (\overline{\alpha_3 +t_1 +\beta_3 t_\beta}).
\end{align*}

From the solution $t$ the coefficients in (\ref{Ansatz}) are calculated as $a_i=x_{\mathcal{J}_i},\, i=1,2,3$
giving
$a_1 = \alpha_1 +\beta_1 t_\beta$, $a_2 = \alpha_2 +\beta_2 t_\beta$ and 
$a_3 = \alpha_3 + t_1 +\beta_3 t_\beta$.
\medskip

\section{Numerical experiments and illustrations}

In this section we conduct a series of numerical experiments to illustrate the efficiency of the proposed method.
All instances were solved using a Matlab R2014a implementation of the method. To solve the quadratic system the command \texttt{fsolve} was used with the following options (optimset): 

\texttt{Algorithm = trust-region-reflective}

\texttt{TolFun = 1e-12}

\texttt{MaxIter = 5000}. 

Different algorithms were initially tested but the trust-region reflective method was the best choice. Internal numerical differentiation was used and the sparsity pattern of the Jacobian was supplied. The reduced particular solution $\hat{\alpha}$ was used as the initial point for \texttt{fsolve} in all cases. The computer used for the test suite was a laptop Intel (R) Core (TM) i5-5200U CPU 2.2 GHz with RAM 8 GB running Windows 7 64-bit. 
A total of $N=50$ random instances are generated and solved for each case. An instance is classified as accurately solved if the solver  stops within 5000 iterations with a tolerance less than 1e-12 and with a maximal error between the prescribed and computed critical points less than 0.5e-4 (4 correct decimals). The  standard norm in $\mathbb{C}$ is used in the error computation. Computational results (iterations, cpu-time, maximal error) are usually reported as triplets consisting of the minimum/median/maximum observation out of $N=50$ instances.

All solutions in this section are checked and they are Blaschke products, although Blaschke forms are also possible solutions to system (\ref{sparsemodel}). The point $x_{\mathrm{initial}}:=\hat{\alpha}$ seems to be a 
very stable choice as the initial point that strongly enhance convergence to
the solution that corresponds to a Blaschke product.
If some other choice of initial point is used, for example zero or any random point, the solution is much more likely to be a Blaschke form with at least one pole in $\mathbb{D}$.
\medskip

\textbf{Optimal assignment of computed critical points}

In order to produce accurate maximal errors between computed and prescribed critical points an assignment problem is solved \cite{BC}. Each computed criticial point $z^c_i,\, i=1,\ldots ,n$, is assigned to a prescribed 
critical point $z_i,\, i=1,\ldots ,n$, in an optimal way. An optimal pairing can be obtained by 
minimizing the maximal distance between every possible pair of computed and prescribed critical points. A distance
matrix $D$ is defined by $d_{ij}=|z_i - z^c_j | $ resulting in the following bottleneck linear assignment problem \cite{BC}, pp. 29-31, with variables $x_{ij}\in \{0,1\}$
\begin{align*}
 \min_{x} \quad & \max_{1\leq i,j \leq n} \quad d_{ij}x_{ij} \\
& \sum_{j=1}^n x_{ij} = 1, \quad i=1,\ldots ,n \\
& \sum_{i=1}^n x_{ij} = 1, \quad j=1,\ldots ,n \\
& x_{ij} \in \{0,1\} ^n.
\end{align*}
The constraint matrix in this problem is totally unimodular so the binary conditions can be relaxed to $x_{ij}\geq 0$. For each problem in this section the optimal pairing is given as
the solution to this linear programming problem. All problems are solved using Matlab2014a and the CVX toolbox \cite{GB}.
\medskip

\textbf{Transformation of data}

During our initial experiments we also observed that it is very beneficial to always transform the data (critical points) using the transformation (2.1) according to
\begin{equation*}
b(z)=\frac{z-z_\star}{1-\overline{z}_\star z }, \quad z_\star=(z_1+\ldots+z_n)/n\in\mathbb{D}.
\end{equation*}

If $z_\star=z_j$ for some $j=1,\ldots,n$ a small random perturbation is added to $z_\star$ avoiding a zero critical point. This transformation will produce an approximate mean centering of the critical points, resulting in problems that very often are solved much more efficiently. 

Let $\tilde B (z)$ be a finite Blaschke product of degree $n+1$ of form (\ref{Ansatz}) with prescribed critical points $ b(z_1),\ldots,b(z_n)$ and computed critical points $\tilde z^c_j$ and zeros $\tilde\alpha_j$. Then the Blaschke product $B_1(z) := \tilde{B}(b(z))$ with critical points $z^c_j := b^{-1}(\tilde z^c_j)$ is a numerical solution of Problem I for prescribed critical points $z_1,\ldots,z_n$. $B_1$ has zeros $\alpha_j = b^{-1}(\tilde\alpha_j)$ but it is not of form (\ref{Ansatz}). To remedy this situation we postcompose $B_1$
 with the automorphism $b_p(z)$ defined by
\begin{equation*}
b_p(z) := \frac{z-\tilde B(b(0))}{1-\overline{\tilde B(b(0))} z }\, ,
\end{equation*}
giving us $B(z):=(b_p\circ\tilde B\circ b) (z)$ of desired form (\ref{Ansatz}) with critical points  $z^c_j$. The
 implemented algorithm delivers the coefficients of $B$. The reported cpu-times include the transformation of the critial points, the generation, including computation of $\hat{\alpha}$ and $\hat{\beta}$, and solution of the equation system, and the inverse transformation and postcomposition of the obtained solution.
\medskip

\begin{figure}
	\centering
	\includegraphics[width=13cm, height=5cm]{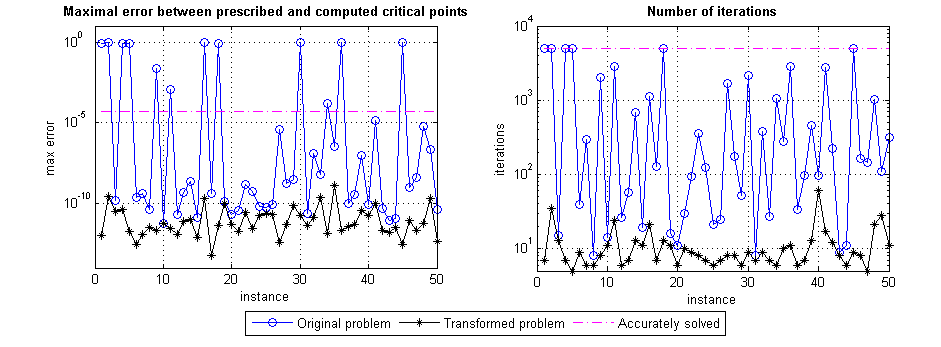}
	\caption{Numerical results when solving original (untransformed) and transformed instances with $n=20$, $r=0.99$ and $N=50$.}
	\label{fig1}
\end{figure}
\medskip

\newpage

\textbf{Numerical test 1: Impact of transformation}

To illustrate the impact of the transformation we generate $N=50$ instances each with $n=20$ critical points randomly and uniformly distributed in a origo centered disk with radius $r=0.99$. The results for the untransformed (original) data is:
\begin{itemize}
	\item cpu-time = 0.25/3.58/111 seconds
	\item iterations = 8/154/5000
	\item max-error = 5.9e-12/1.2e-9/9.5e-1
	\item $76\%$ accurately solved instances (6 instances did not converge within 5000 iterations and another 6 instances did converge but the maximal error exceeded 0.5e-4)
\end{itemize}

For the transformed data the results are significantly improved to:
\begin{itemize}
	\item cpu-time = 0.19/0.30/1.51 seconds
	\item iterations = 5/8/60
	\item  max-error = 6.1e-14/5.0e-12/1.2e-9
	\item  $100 \%$ accurately solved instances.
\end{itemize}

Detailed, instance by instance, results are illustrated in figure \ref{fig1}. The solution time and number of iterations are typically improved by at least a factor 10 and the maximal error is consistenly small
after the transformation is applied. Both the speed of convergence and the accuracy of the obtained solution are significantly improved by the transformation of critical points.
\medskip

\textbf{Numerical test 2: Experiments with different disk radius $r$ }

In this experiment we consider transformed instances with $n=30$ random critical points in disks with the radius ranging from 0.1 to 1. The maximal error and the maximal absolute value of the derivative in computed critical points, $\max_{1\leq i \leq n} |B'(z^c_i)|$, are shown in figure \ref{fig2}. For $0.1\leq r\leq 0.5$ the maximal error is large, but the maximal absolute derivative (as well as the Wronskian) is very small. This indicates that the Blaschke product $B(z)$ is flat almost everywhere in a small disk that contains many critical points. Such instances are in a numerical sense ill-posed since the derivative is small at the same time as the computed critical points can be far from the prescribed points. When $r\geq 0.6$ the maximal error decreases and accuracy is improved. The median number of iterations for each value of the disk radius was 2, 3, 3, 3, 3, 4, 5, 6, 8 and 14.

\begin{figure}
	\centering
	\includegraphics[width=13cm, height=5cm]{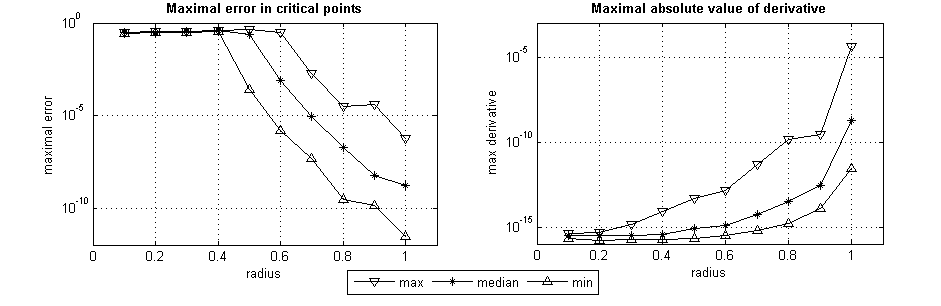}
	\caption{Numerical results when solving transformed instances with $n=30$, $r=0.1,0.2,\ldots ,1$ and $N=50$.}
	\label{fig2}
\end{figure}
\medskip

\textbf{Numerical test 3: Large scale instances}

In this test set large scale instances are considered.
For such instances the transformation is always applied to the critical points prior to solution. 
Random instances with $r=0.999$ and $n$ ranging from 10 to 60 are generated and solved. The results from this test set are presented in table \ref{table1} and figure \ref{fig3}. The results show that random 
instances with critical points in a disk can efficiently and accurately be solved up to $n=40$ and many times even up to $n=50$.
The solver did reach \texttt{MaxIter}=5000 in a few instances and occasionally it reported that the problem was inaccurately solved (typically for $n\geq 50$).
For $n=60$ only a few instances were solved within the prescribed limits. 
If we allow the maximal error to be 0.5e-2 (2 correct decimals), the percentage of accurately solved instances increases to 100\% ($n=40$), 72\% ($n=50$) and 32\% ($n=60$).  The nonlinear system (\ref{sparsemodel}) may be numerically very challenging to solve accurately for more than 50 random critical points in the disk. One could argue that the situation is approaching the one described in Numerical test 2, that is with $n\ge 50$ the sought after Blaschke product $B$ is very flat in the vicinity of some of the prescribed critical points and the problem is approaching numerical ill-posedness. 

\begin{table}[ht!]
	{\scriptsize
		\begin{tabular}{|c|c|c|c|c|c|}
			\hline 
			Points & Size        & Iterations      & Cpu-time (s)         & Maximal error            &  Solved  \\ 
			$n$    & $n^2+n$     & min/median/max  & min/median/max       & min/median/max           &   $\%$   \\ 
			\hline 
			10 & 110              & 4/6/58      & 0.06/0.11/0.56          &  3.4e-15/1.2e-13/6.2e-11 & 100      \\ 
			\hline 
			20 & 420              & 5/11/892    & 0.22/0.37/19.4          &  2.9e-13/9.2e-12/8.3e-7  & 100     \\ 
			\hline 
			30 & 930              & 7/12/489    & 0.61/0.89/23.4 		  & 3.6e-12/1.1e-9/1.8e-6    & 100   \\ 
			\hline 
			40 & 1640             & 7/34/3420   & 1.31/3.72/319           &  5.3e-10/9.6e-7/2.2e-4   & 92    \\ 
			\hline 
			50 & 2550             & 8/40/5000   & 2.11/7.77/879           &  2.8e-7/4.7e-4/9.8e-1    & 30    \\ 
			\hline 
			60 & 3660 			  & 9/83/5000  & 3.85/23.9/1305           &  7.5e-6/4.1e-1/9.8e-1    & 4 \\ 
			\hline 
	\end{tabular} }
	\medskip
	\caption{Numerical results from the test set with large scale instances. The second column corresponds to the size (number of real variables) of system (\ref{sparsemodel}).}
	\label{table1}
\end{table}

\begin{figure}
	\centering
	\includegraphics[width=13cm, height=9.5cm]{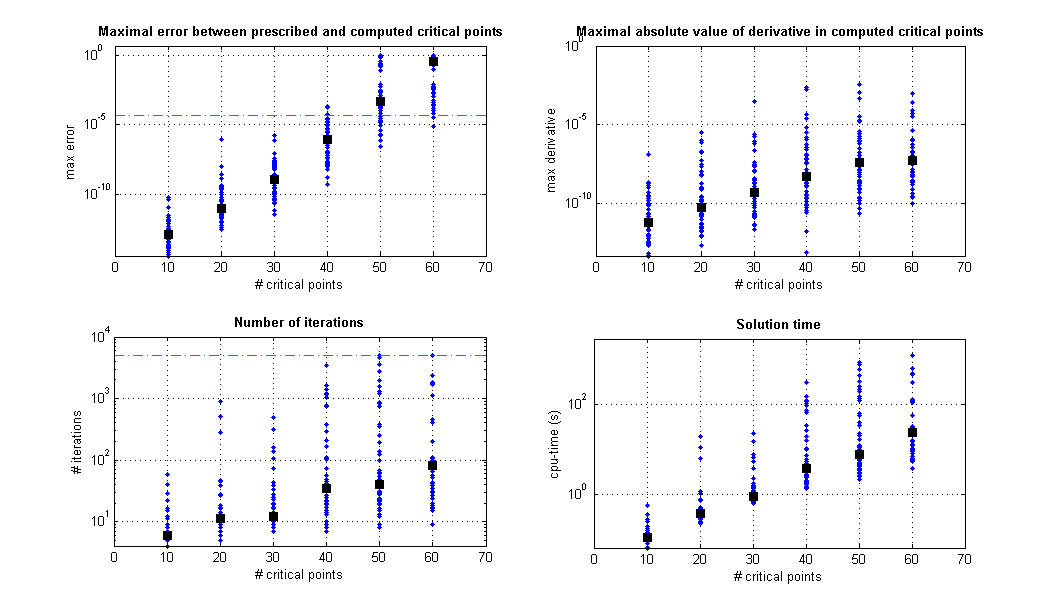}
	\caption{Test results when solving transformed instances with $n=10$ to $n=60$, $r=0.999$ and $N=50$. The dash-dotted lines correspond to the maximal error 0.5e-4 and iteration limit 5000. Each square marks the median result and the dots correspond to separate instances.}
	\label{fig3}
\end{figure}
\bigskip

\textbf{Numerical test 4: Example of hard and easy instances}

As a last experiment we consider one special case which is very hard to solve untransformed and one case that is very easy to solve untransformed. One example of hard problems is when the critical points form a nonorigo centered cluster. Such an instance can be generated by sampling points 
	\begin{equation*}
		z_j=(1+i)/3+1/4(N(0,1)+iN(0,1))
	\end{equation*}
that form a random cluster with midpoint approximately at $(1+i)/3$. If $|z_j|\geq1$ the point $z_j$ is discarded and a new point generated. To illustrate the impact of the transformation we generate $N=50$ instances each with $n=10$ critical points in a cluster. The results for the untransformed (original) data are:
\begin{itemize}
	\item cpu-time = 0.31/16.2/21.6 seconds
	\item iterations = 27/4057/5000
	\item max-error = 5.0e-11/7.5e-1/9.2e-1
	\item $32\%$ accurately solved instances
\end{itemize}
The convergence is very slow for untransformed clustered instances and the majority are not accurately solved. 
\newpage
For transformed data the results are:
\begin{itemize}
	\item cpu-time = 0.03/0.08/0.22 seconds
	\item iterations = 3/4/15
	\item  max-error = 1.9e-14/1.5e-12/7.1e-10
	\item  $100 \%$ accurately solved instances.
\end{itemize}
The improvement in speed of convergence and accuracy of the obtained solution is remarkable when clustered instances are transformed. One type of easy problems can be generated by sampling critical points approximately equidistant (with a small random perturbation) on an origo centered circle. These instances are generated with parameters $r=0.95$, $n=50$ and $N=50$.
The results for untransformed critical points are:
\begin{itemize}
	\item cpu-time = 3.06/4.37/22.5 seconds
	\item iterations = 12/21/132
	\item max-error = 8.6e-13/9.8e-8/2.6e-3
	\item $74\%$ accurately solved instances
\end{itemize}
and for the transformed case the results are:
\begin{itemize}
	\item cpu-time = 2.93/4.20/10.4 seconds
	\item iterations = 11/19/55
	\item  max-error = 1.8e-12/4.9e-9/1.4e-3
	\item  $74 \%$ accurately solved instances.
\end{itemize}

The results are similar. In this case there is only minor impact of the transformation.
The configuration of critical points is in this case almost invariant under the transformation. Instances with $n=50$ random points in a disk are very hard to solve untransformed but instances with an equal number of critical points approximately on a circle are significantly easier solved untransformed as well as transformed. If a maximal error of 0.5e-2 is allowed, all instances are classified as accurately solved.

\section{Discussion}
We have presented a constructive method for determining a finite Blaschke
product of degree $n+1$ having $n$ preassigned distinct critical
points $z_1,\dots,z_n$ in the complex (open) unit disk
$\mathbb{D}$. 

Starting from a dense, highly data dependent and ill-conditioned quadratic system derived from the Wronskian 
$W(z)$, a sparse model that includes a set of affine constraints
and a set of simple quadratic constraints was constructed. The affine part represents a complete description of the 
null space of the Wronskian matrix together with a particular solution to a relaxed linear system $Ax=b$. The null space is almost entirely described by the strucure of the Wronskian and
the data dependency is isolated to one single null space basis vector.
The other data dependent part is the particular solution, and both can be efficiently and reliably computed using the fast Fourier transform.

The numerical model was tested and several experiments showed that random generated instances could very accurately be solved up to $n=40$ critical points and in many cases up to $n=50$.  For $n > 50$ it is likely that any proposed method based on the Wronskian will encounter numerical difficulties due to the increasing numerical ill-posedness of the problem. 

The key components in the proposed method that enables accurate and efficient solution of this demanding problem are: transformation of the critical points, efficient computation of the data dependent particular solution and null space vector, the sparse structure of the system and a stable choice of initial point for the solver. 

Future work could include the possibility of prescribing nondistinct critical points in the disk.

\end{document}